\newtheorem{thm}[subsection]{Theorem}
\newtheorem{lemma}[subsection]{Lemma}
\newtheorem{cor}[subsection]{Corollary}
\newtheorem{rk}[subsection]{Remark}
\newtheorem{defn}[subsection]{Definition}
\newtheorem{ex}{Example}
\numberwithin{equation}{section} \setcounter{tocdepth}{1}
\newcommand{\bea}{\begin{eqnarray}}
\newcommand{\eea}{\end{eqnarray}}
\DeclareMathOperator{\Der}{\rm Der}
\begin{document}

\title[Nilpotent Evolution Algebras]
{Derivations and automorphisms of nilpotent evolution algebras
with maximal nilindex}

\author{Farrukh Mukhamedov}
\address{Farrukh Mukhamedov\\
 Department of Mathematical Sciences\\
College of Science, The United Arab Emirates University\\
P.O. Box, 15551, Al Ain\\
Abu Dhabi, UAE} \email{{\tt far75m@gmail.com} {\tt
farrukh.m@uaeu.ac.ae}}

\author{Otabek Khakimov}
\address{Otabek Khakimov\\
Department of Mathematical Sciences\\
 College of Science, United Arab Emirates University\\
P.O. Box, 15551, Al-Ain\\
Abu Dhabi, UAE} \email{{\tt hakimovo@mail.ru}
{\tt
otabek.k@uaeu.ac.ae}}

\author{Bakhrom Omirov}
\address{Bakhrom Omirov\\
National University of Uzbekistan\\
4, University street, 100174, Tashkent,
Uzbekistan} \email{{\tt omirovb@mail.ru}}

\author{Izzat Qaralleh}
\address{Izzat Qaralleh\\
Department of Mathematics\\
Faculty of Science, Tafila Technical
University\\
Tafila, Jordan}
\email{{\tt izzat\_math@yahoo.com}}

\begin{abstract}

In this paper is devoted to nilpotent finite-dimensional evolution algebras $\bf E$
with $\dim{\bf E^2}=\dim{\bf E}-1$. We described Lie algebras associated with evolution algebras
whose nilindex is maximal. Moreover, in terms of this Lie algebra we fully construct nilpotent evolution algebra with maximal
index of nilpotency. Furthermore, this result allowed us fully characterize all local and 2-local derivations of the considered
evolution algebras. All automorphisms and local automorphisms of the nilpotent evolution algebras
with maximal nilindex are found.
 \vskip 0.3cm \noindent {\it
Mathematics Subject Classification}: 46S10, 82B26, 12J12, 39A70, 47H10, 60K35.\\
{\it Key words}: evolution algebra; derivation; local derivation; automorphism; local
automorphism.
\end{abstract}

\maketitle

\section{Introduction}

Recently in \cite{tv} a new type of evolution algebra is introduced. This algebra also describes
some evolution laws of the genetics. The study of evolution algebras constitutes a new subject both in algebra and
the theory of dynamical systems. There are many related open problems to promote further research in this subject (for more details we refer to \cite{t}).

We notice that evolution algebras are not defined by identities, and therefore they do not form a variety of non-associative
algebras, like Lie, Jordan or alternative algebras. Hence, the investigation of such kind of algebras needs
a different approach (see \cite{Some_properties,derevol,rozomir}).

In \cite{rozomir} the equivalence between nil, right nilpotent evolution algebras and evolution algebras, which are defined by an upper triangular matrix of structural constants, have been established. A classification of low dimensional evolution algebras have
been carried out in\cite{3dim,heg1,heg2,Elduque}. However, a full classification of nilpotent evolution algebras
is far from its solution. Therefore, in the present paper we are going to investigate
certain properties of nilpotent evolution algebras with maximal nilindex.

It is known that in the theory of non-associative algebras, particularly, in genetic algebras,
the Lie algebra of derivations of a given algebra is one of the important tools for studying its structure.
There has been much work on the subject of derivations of genetic algebras (\cite{Costa1}, \cite{Costa2}, \cite{Gonshor}, \cite{Holgate}).

In fact, in \cite{derevol} the authors investigate several properties of derivations of $n$-dimensional
complex evolution algebras, depending on the rank of the appropriate matrices. In the present paper
we explicitly describe the space of derivations of evolution algebras with  maximal nilindex which allows us
to study further properties of the evolution algebras. Moreover, we describe all local and 2-local derivations of
the considered algebra. We stress that the notions of local automorphism and local derivation were introduced and investigated independently by
Kadison \cite{Kadison} and Larson and Sourour \cite{Larson}. Later, in 1997, P. $\check{S}$emrl \cite{Semrl} introduced the concepts of $2$-local automorphisms and $2$-local derivations. The above papers gave rise to series of works devoted to description of mappings which are close to automorphisms and derivations of $C^*$-algebras and operator algebras. For details and the survey we refer to the paper \cite{Ayupov}.

The paper is organized as follows. In Section 2 we provide preliminary information about evolution algebras.
It is well known that derivations of non-associative algebras form
Lie algebra, so, in Section 3 we describe the Lie algebra associated with evolution algebras
whose nilindex is maximal. Moreover, in terms of this Lie algebra we fully construct nilpotent evolution algebra with maximal
index of nilpotency. Furthermore, in Section 4 by means of result Section 3 we describe local and 2-local
derivations of the considered
evolution algebras. In Section 5 we find all automorphisms and local automorphisms of the nilpotent evolution algebras
with maximal nilindex.

\section{Evolution algebras}

Recall the definition of evolution algebras. Let $\bf{E}$ be a vector space over a field $\mathbb K$.
In what follows, we always assume that $\mathbb K$ has characteristic zero. The vector space $\bf{E}$
is called {\it evolution algebra} w.r.t. {\it natural basis} $\{{\bf e}_1, {\bf e}_2, . . . \}$ if
a multiplication rule $\cdot$· on $\bf{E}$ satisfies
$$
{\bf e}_i\cdot {\bf e}_j={\bf 0},\ i\neq j,
$$
$$
{\bf e}_i\cdot{\bf e}_i=\sum_{k}a_{ik}{\bf e}_k,\ i\geq1.
$$

From the above definition it follows that evolution algebras are commutative (therefore, flexible).

We denote by $A=(a_{ij})^n_{i,j=1}$ the matrix of the structural constants
 of the finite-dimensional evolution algebra $\bf{E}$.
Obviously, $rank A =\dim(\bf{E}\cdot\bf{E})$. Hence, for finite-dimensional evolution algebra the
rank of the matrix does not depend on choice of natural basis.

In what follows for convenience, we write ${\bf u}{\bf v}$ instead ${\bf u}\cdot{\bf v}$
for any ${\bf u},{\bf v}\in\bf{E}$ and we shall write $\bf{E}^2$ instead $\bf{E}\cdot\bf{E}$.

A linear map $\psi: {\bf E}_1\to{\bf E}_2$ is called an {\it homomorphism} of evolution algebras
if $\psi({\bf u}{\bf v})=\psi({\bf u})\psi({\bf v})$ for any ${\bf u},{\bf v}\in{\bf E}_1$. Moreover, if $\psi$ is bijective, then it is called an
{\it isomorphism}. In this case, the last relation is denoted by ${\bf E}_1\cong{\bf E}_2$.

For an evolution algebra $\bf E$ we introduce the following sequence, $k\geq1$
\begin{equation}\label{E^k}
{\bf E}^{k}=\sum_{i=1}^{k-1}{\bf E}^{i}{\bf E}^{k-i}.
\end{equation}
Since ${\bf E}$ is commutative algebra we obtain
$$
{\bf E}^{k}=\sum_{i=1}^{\lfloor k/2\rfloor}{\bf E}^{i}{\bf E}^{k-i},
$$
where $\lfloor x\rfloor$ denotes the integer part of $x$.

\begin{defn}
An evolution algebra ${\bf E}$ is called nilpotent if there exists some $n\in\mathbb N$
such that ${\bf E}^m=\bf 0$. The smallest $m$ such that ${\bf E}^m=\bf 0$ is called the index of nilpotency.
\end{defn}

\begin{thm}\label{thm_ro}\cite{rozomir}
An $n$-dimensional evolution algebra $\bf E$ is nilpotent iff it admits a natural basis such that the matrix of the
structural constants corresponding to $\bf E$ in this basis is represented in the form
$$
\tilde{A}=\left(
\begin{array}{lllll}
0 & \tilde{a}_{12} & \tilde{a}_{13} & \vdots  & \tilde{a}_{1n}\\
0 & 0 & \tilde{a}_{23} & \vdots  & \tilde{a}_{2n}\\
\vdots & \vdots & \vdots & \ddots & \vdots\\
0 & 0 & 0 & \vdots & \tilde{a}_{n-1,n}\\
0 & 0 & 0 & \vdots & 0
\end{array}\right)
$$
\end{thm}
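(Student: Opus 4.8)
The statement to prove is Theorem~\ref{thm_ro}, which characterizes nilpotent $n$-dimensional evolution algebras as exactly those admitting a natural basis whose structural constants matrix is strictly upper triangular. Let me sketch a proof plan.

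\medskip

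The plan is to prove both directions. For the \emph{easy direction} (matrix strictly upper triangular $\Rightarrow$ nilpotent), I would argue by a filtration/degree argument: if in the natural basis $\{{\bf e}_1,\dots,{\bf e}_n\}$ we have ${\bf e}_i{\bf e}_i=\sum_{k>i}\tilde a_{ik}{\bf e}_k$, then setting $V_j=\spa\{{\bf e}_j,{\bf e}_{j+1},\dots,{\bf e}_n\}$ gives a chain $\mathbf{E}=V_1\supseteq V_2\supseteq\cdots\supseteq V_n\supseteq V_{n+1}=0$ with the property that ${\bf e}_i{\bf e}_i\in V_{i+1}$. One then checks that products of elements land strictly higher in the filtration: more precisely ${\bf E}V_j\subseteq V_{j+1}$, since any product involves some ${\bf e}_i{\bf e}_i$ with $i\ge j$ (the cross terms vanish), which lies in $V_{i+1}\subseteq V_{j+1}$. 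Iterating \eqref{E^k}, one gets ${\bf E}^k\subseteq V_k$, hence ${\bf E}^{n+1}=0$, so $\mathbf{E}$ is nilpotent.

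\medskip

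For the \emph{hard direction} (nilpotent $\Rightarrow$ such a basis exists), I would proceed by induction on $n=\dim\mathbf E$. The key point is that a nilpotent evolution algebra must have $\dim\mathbf E^2<\dim\mathbf E$, i.e.\ the structural constants matrix $A$ is singular; otherwise $\mathbf E=\mathbf E^2=\mathbf E^3=\cdots$ and nilpotency fails. So $\operatorname{rank}A=\dim\mathbf E^2=:r<n$. The crux is then to find a natural basis in which the last basis vector ${\bf e}_n$ satisfies ${\bf e}_n{\bf e}_n=0$ and, more importantly, ${\bf e}_n\notin\mathbf E^2$ — i.e.\ to peel off a one-dimensional "top" so that $\mathbf E/\langle{\bf e}_n\rangle$ (or rather a suitable subalgebra complement) is again a nilpotent evolution algebra of dimension $n-1$ in natural-basis form, to which induction applies. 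The delicate issue is that one is not free to change basis arbitrarily: a change of natural basis must again produce an evolution algebra with the diagonal multiplication rule, so the allowed transformations are quite restricted (essentially "scaling and permutation plus additive perturbations that preserve the natural-basis structure"). The main obstacle, and where I expect the real work to lie, is showing that among all natural bases one can choose one adapted to the flag $\mathbf E\supseteq\mathbf E^2\supseteq\mathbf E^3\supseteq\cdots$ — equivalently, reordering/relabeling the $n$ "diagonal" generators so that ${\bf e}_i{\bf e}_i$ only involves ${\bf e}_k$ with $k>i$. One would use nilpotency to show each ${\bf e}_i{\bf e}_i$ lies in the span of the others in a way that admits a consistent linear order (no cycles), reminiscent of putting a nilpotent linear operator in strictly-triangular Jordan form, but adapted to the combinatorial structure of which basis squares feed into which.

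\medskip

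Concretely, for the induction step I would: (i) use $r<n$ to locate an index $i_0$ with ${\bf e}_{i_0}{\bf e}_{i_0}$ expressible via the remaining generators, or more robustly pass to the annihilator / the subspace $\{{\bf x}\in\mathbf E: {\bf x}\mathbf E=0\}$ which is nonzero by nilpotency and is spanned by some of the natural basis vectors; (ii) split off one such generator ${\bf e}_n$, verify that the complementary span inherits an evolution algebra structure (here one uses that the natural basis is orthogonal under multiplication, so removing one coordinate is clean); (iii) apply the inductive hypothesis to get a strictly-upper-triangular form on the $(n-1)$-dimensional piece; (iv) reassemble, checking that the last column entries $\tilde a_{1n},\dots,\tilde a_{n-1,n}$ cause no obstruction since row/column $n$ is otherwise zero. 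I expect step (ii)–(iii) bookkeeping — making sure the recursively chosen basis on the subalgebra together with ${\bf e}_n$ is still a natural basis for all of $\mathbf E$ — to be the technically fussy part, while the conceptual content is entirely in the observation that nilpotency forces a strict filtration on which the squaring map is "strictly raising."
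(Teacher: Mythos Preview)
This theorem is quoted from \cite{rozomir} without proof; the present paper does not supply an argument, so there is no in-paper proof to compare against. I can still assess your sketch on its own merits.

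Your easy direction contains a genuine error, not just a gap. From ${\bf E}\,V_j\subseteq V_{j+1}$ you \emph{cannot} deduce ${\bf E}^k\subseteq V_k$ for the powers defined by \eqref{E^k}, because those powers involve all bracketings, not just left-normed products. The induction breaks at $k=4$: assuming ${\bf E}^i\subseteq V_i$ for $i<4$ gives only ${\bf E}^2{\bf E}^2\subseteq V_2V_2\subseteq V_3$, not $V_4$. Concretely, for $n=3$ with ${\bf e}_1^2={\bf e}_2$, ${\bf e}_2^2={\bf e}_3$, ${\bf e}_3^2=0$, one has ${\bf e}_2\in{\bf E}^2$ and hence ${\bf e}_3={\bf e}_2{\bf e}_2\in{\bf E}^2{\bf E}^2\subseteq{\bf E}^4$, so ${\bf E}^4\neq 0=V_4$ and your conclusion ${\bf E}^{n+1}=0$ fails. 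Indeed the very next theorem cited in the paper (from \cite{clor}) says the nilindex here is $2^{n-1}+1$, not $n+1$. What your filtration argument actually establishes is \emph{right} nilpotency (vanishing of the one-sided powers ${\bf E}^{\langle k+1\rangle}={\bf E}\,{\bf E}^{\langle k\rangle}$); promoting this to nilpotency in the sense of \eqref{E^k} is exactly one of the nontrivial equivalences proved in \cite{rozomir}, and it cannot be skipped. A correct direct argument would track instead that $V_iV_j\subseteq V_{\max(i,j)+1}$ and show ${\bf E}^{2^{k}}\subseteq V_{k+1}$, giving nilindex at most $2^{n-1}+1$.

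For the hard direction your outline is plausible but step~(ii) is more than bookkeeping. The span $\spa\{{\bf e}_1,\dots,{\bf e}_{n-1}\}$ is typically \emph{not} a subalgebra (the squares ${\bf e}_i^2$ may involve ${\bf e}_n$), so you are forced to pass to the quotient ${\bf E}/\langle{\bf e}_n\rangle$; but then a natural basis of the quotient obtained by induction need not lift to a natural basis of ${\bf E}$, since natural bases are rigid under change. You would need an additional argument guaranteeing that the inductively chosen basis can be realized already at the level of ${\bf E}$ --- essentially a simultaneous reordering of the original ${\bf e}_i$ compatible with the ``$i$ feeds into $j$'' relation, which is where the acyclicity forced by nilpotency enters.
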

Due to Theorem \ref{thm_ro} any nilpotent evolution algebra $\bf{E}$ with $\dim({\bf{E}}^2)=n-1$ has the following form:
\begin{equation}\label{evolalg}
{\bf e}_i^2=\left\{
\begin{array}{lll}
\sum\limits_{j=i+1}^na_{ij}{\bf e}_j, & i\leq n-1;\\
{\bf 0}, & i=n.
\end{array}
\right.
\end{equation}
where $a_{ij}\in\mathbb K$ and $a_{i,i+1}\neq0$ for any $i<n$.

\begin{thm}\cite{clor}
Let $\bf E$ be a nilpotent evolution algebra. Then $\bf E$ has maximal index of
nilpotency $2^{n-1} + 1$, if and only if the multiplication table of $\bf E$ is given by
\eqref{evolalg}.
\end{thm}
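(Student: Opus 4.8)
The plan is to prove both implications by a direct analysis of the sequence $\mathbf{E}^k$ defined in \eqref{E^k}, using the explicit multiplication rule \eqref{evolalg}. For the ``if'' direction, suppose $\mathbf{E}$ is given by \eqref{evolalg}, so that $\mathbf{E}^2 = \operatorname{span}\{\mathbf{e}_2,\dots,\mathbf{e}_n\}$ has dimension $n-1$. First I would set up notation for the ``support'' of the powers: let $m_k$ denote the smallest index $i$ such that $\mathbf{e}_i$ appears (with possibly nonzero coefficient) in some product generating $\mathbf{E}^k$, equivalently one tracks $\min\{ i : \mathbf{e}_i \notin \mathbf{E}^k\}$ versus the full span. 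The key computational fact to establish is that squaring the ``lowest'' generator $\mathbf{e}_i$ produces a nonzero multiple of $\mathbf{e}_{i+1}$ plus higher terms (because $a_{i,i+1}\neq 0$), so that $\mathbf{E}^i \mathbf{E}^i$ sits strictly higher than $\mathbf{E}^i$ in the flag $\operatorname{span}\{\mathbf{e}_j,\dots,\mathbf{e}_n\}$. Iterating the relation $\mathbf{E}^{2k} \supseteq \mathbf{E}^k\mathbf{E}^k$ and keeping careful track of which basis vectors survive, I expect to show by induction on $s$ that $\mathbf{E}^{2^{s}} = \operatorname{span}\{\mathbf{e}_{s+2},\dots,\mathbf{e}_n\}$ (so it is nonzero for $s \le n-2$), while $\mathbf{E}^{2^{n-2}+1}$ already collapses; more precisely the index of nilpotency works out to exactly $2^{n-1}+1$. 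The arithmetic bookkeeping here — showing the power $\mathbf{E}^k$ stabilizes on each ``level'' of the flag for a range of $k$ and then jumps — is the step I expect to be the main obstacle, since one must show both that the relevant coefficient stays nonzero (genericity is not assumed, only $a_{i,i+1}\neq 0$) and that no earlier collapse occurs.

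For the ``only if'' direction, I would argue contrapositively: let $\mathbf{E}$ be an arbitrary nilpotent $n$-dimensional evolution algebra not of the form \eqref{evolalg}, and show its nilindex is strictly less than $2^{n-1}+1$. By Theorem \ref{thm_ro} we may assume the structural matrix $\tilde A$ is strictly upper triangular; the algebras of the form \eqref{evolalg} are precisely those with $\tilde a_{i,i+1}\neq 0$ for all $i<n$, equivalently $\dim\mathbf{E}^2 = n-1$. So if $\mathbf{E}$ is not of this form, then $\dim \mathbf{E}^2 = \operatorname{rank}\tilde A \le n-2$. I would then show that a drop in the rank forces a faster termination of the power sequence: intuitively, the doubling $\mathbf{E}^{2k}\supseteq\mathbf{E}^k\mathbf{E}^k$ is what produces the exponential growth of the nilindex, and if the chain $\mathbf{E}\supsetneq\mathbf{E}^2\supsetneq\cdots$ has a ``gap'' of size $\ge 2$ somewhere then the subsequent powers skip ahead and reach $\mathbf{0}$ sooner. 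Concretely I would bound the nilindex in terms of the lengths of the strict-inclusion chain of the $\mathbf{E}^k$ and observe that the maximum $2^{n-1}+1$ is attained only when each successive inclusion drops the dimension by exactly one starting from $\mathbf{E}^2$ of dimension $n-1$.

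Finally I would assemble the two halves, noting that the ``if'' direction simultaneously shows the maximal value is $2^{n-1}+1$ (so the phrase ``maximal index of nilpotency'' is justified by comparison with the general upper bound coming from the chain-length argument in the ``only if'' part). A clean way to organize everything is to prove one lemma computing $\mathbf{E}^k$ for the algebra \eqref{evolalg} exactly — giving the span of a tail $\{\mathbf{e}_j,\dots,\mathbf{e}_n\}$ as an explicit step function of $k$ — and a second lemma giving the general upper bound $\dim$-chain $\Rightarrow$ nilindex, after which the theorem is immediate. The delicate point throughout is that we work over a field of characteristic zero but with no other genericity on the $a_{ij}$, so all the ``nonvanishing'' claims must be traced back solely to the hypotheses $a_{i,i+1}\neq 0$.
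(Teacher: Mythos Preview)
This theorem is not proved in the paper at all: it is quoted verbatim from \cite{clor} (Casas, Ladra, Omirov, Rozikov), and the paper simply uses it as a black box to justify restricting attention to multiplication tables of the form \eqref{evolalg}. So there is no ``paper's own proof'' to compare your proposal against.

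That said, your outline is along the right lines, but the explicit bookkeeping is off in a way that would derail the induction as written. For the algebra \eqref{evolalg} the correct description of the powers is
\[
{\bf E}^{k}=\operatorname{span}\{{\bf e}_{s+1},\dots,{\bf e}_n\}\quad\text{for }2^{s-1}<k\le 2^{s},\ 1\le s\le n-1,
\]
so in particular ${\bf E}^{2^{s}}=\operatorname{span}\{{\bf e}_{s+1},\dots,{\bf e}_n\}$, not $\operatorname{span}\{{\bf e}_{s+2},\dots,{\bf e}_n\}$ as you wrote; and ${\bf E}^{2^{n-2}+1}=\operatorname{span}\{{\bf e}_n\}\neq 0$, contrary to your claim that it ``already collapses''. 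The vanishing happens only at ${\bf E}^{2^{n-1}+1}$. Once these indices are fixed, the inductive step you describe (using ${\bf E}^{2k}\supseteq{\bf E}^k{\bf E}^k$ together with the fact that squaring a tail $\operatorname{span}\{{\bf e}_j,\dots,{\bf e}_n\}$ drops exactly one basis vector because $a_{j,j+1}\neq 0$) does go through and is essentially how \cite{clor} proceeds. Your contrapositive argument for the converse --- that $\operatorname{rank}\tilde A\le n-2$ forces a shorter chain and hence strictly smaller nilindex --- is also the standard route; just be careful that you need the sharp upper bound on the nilindex in terms of $\dim {\bf E}^2$, which again is in \cite{clor}.
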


In what follows, we are going to work with nilpotent evolution algebras with
maximal index of nilpotency. Due to last theorem we only consider
evolution algebras with multiplication table given by \eqref{evolalg}.

\begin{lemma}\label{lem1}
Let $\bf{E}$ and ${\bf{E}}'$ be evolution algebras with basis $\{{\bf e}_i\}_{i=1}^n$ and
$\{{\bf f}_i\}_{i=1}^n$ respectively, defined by
$$
{\bf e}_i^2=\left\{
\begin{array}{lll}
a_{i,i+1}{\bf e}_{i+1}+a_{in}{\bf e}_n, & i<n-1;\\
a_{n-1,n}{\bf e_{n}}, & i=n-1;\\
{\bf 0}, & i=n.
\end{array}
\right.
\ \ \ \ {\bf f}_i^2=\left\{
\begin{array}{lll}
{\bf f}_{i+1}, & i<n;\\
{\bf 0}, & i=n.
\end{array}
\right.
$$
If $a_{i,i+1}\neq0$ for every $i<n$,
then ${\bf E}\cong{\bf E}'$.
\end{lemma}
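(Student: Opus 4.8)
The plan is to construct an explicit isomorphism $\psi:{\bf E}\to{\bf E}'$ by sending each basis vector ${\bf e}_i$ to a scalar multiple $\l_i{\bf f}_i$, and then to solve for the scalars $\l_i$ so that the multiplication is preserved. Since both algebras have the multiplication $\text{(basis vector)}^2 = \text{(linear combination of later basis vectors)}$ and all cross products vanish, a diagonal linear map automatically respects the vanishing of cross products, so the only constraints come from the squares.

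First I would write down the condition $\psi({\bf e}_i^2)=\psi({\bf e}_i)^2=\l_i^2{\bf f}_i^2=\l_i^2{\bf f}_{i+1}$ for $i<n$. On the other hand, for $i<n-1$ we have $\psi({\bf e}_i^2)=a_{i,i+1}\l_{i+1}{\bf f}_{i+1}+a_{in}\l_n{\bf f}_n$, and for $i=n-1$ we get $a_{n-1,n}\l_n{\bf f}_n$. Comparing coefficients of ${\bf f}_{i+1}$ gives the recursion $\l_i^2=a_{i,i+1}\l_{i+1}$ for $i<n$ (note for $i=n-2$ the vector ${\bf f}_n={\bf f}_{i+2}$ appears, handled below), while comparing the coefficient of ${\bf f}_n$ for $i<n-1$ forces the extra relation $a_{in}\l_n=0$ for $i<n-2$, i.e. $a_{in}=0$ must already hold — wait, this is only consistent if the $a_{in}$ are zero, so I would instead be more careful: actually the resolution is that one does not use a purely diagonal map. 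Let me restate: the cleaner route is to allow $\psi({\bf e}_i)=\l_i{\bf f}_i + (\text{correction terms in }{\bf f}_j,\ j>i)$, or equivalently to first perform a basis change within ${\bf E}$ eliminating the $a_{in}$ terms for $i<n-1$, and only then apply the diagonal rescaling. Concretely, I would argue by downward induction on $i$: having arranged ${\bf e}_j^2 = a_{j,j+1}{\bf e}_{j+1}$ for all $j>i$ (absorbing any ${\bf f}_n$-type tail), one replaces ${\bf e}_i$ by ${\bf e}_i - c{\bf e}_n$ for a suitable $c$ — but since ${\bf e}_n^2={\bf 0}$ this does not change ${\bf e}_i^2$, so instead the tail term $a_{in}{\bf e}_n$ is absorbed by rescaling ${\bf e}_{i+1}$ and adjusting. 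The honest statement is: set $\l_n=1$ and define $\l_i$ recursively by $\l_i^2=a_{i,i+1}\l_{i+1}$ (possible since $\K$ has characteristic zero and we may pass to an algebraic closure or simply choose square roots, and $a_{i,i+1}\neq 0$ guarantees $\l_i\neq 0$), then check the remaining $a_{in}$-coefficients vanish after a preliminary normalization of ${\bf E}$.

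The cleanest organization, which I would actually carry out, is the following two-step reduction. Step one: show ${\bf E}$ is isomorphic to the algebra ${\bf E}''$ with ${\bf e}_i^2 = {\bf e}_{i+1}$ for $i<n-1$ and ${\bf e}_{n-1}^2 = a_{n-1,n}{\bf e}_n$, by a map that is "upper triangular" (identity on ${\bf e}_n$, and inductively downward replacing ${\bf e}_i$ by a combination of ${\bf e}_i$ and ${\bf e}_n$ to kill the $a_{in}$ term — using that changing ${\bf e}_{i+1}$ propagates) — this is where the $a_{i,i+1}\neq 0$ hypothesis is essential, as each step divides by such a constant. Step two: rescale ${\bf e}_i\mapsto \mu_i{\bf e}_i$ with $\mu_n$ chosen so that $\mu_{n-1}^2 = a_{n-1,n}\mu_n$ and $\mu_i^2=\mu_{i+1}$ for $i<n-1$, again solvable with all $\mu_i\neq 0$ since we only take square roots of nonzero elements (over $\K$ of characteristic zero; if $\K$ is not algebraically closed one works over $\bar\K$, but the statement as phrased presumably intends $\K=\C$ per the standing convention hinted at in the introduction). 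Composing the two gives the desired isomorphism onto ${\bf E}'$.

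The main obstacle is bookkeeping in Step one: when I modify ${\bf e}_{i+1}$ to remove its ${\bf e}_n$-tail, I must check this does not disturb the already-normalized squares ${\bf e}_j^2$ for $j>i$ and that it only affects ${\bf e}_i^2$ by a term I can then absorb. Because ${\bf e}_n^2 = {\bf 0}$, adding a multiple of ${\bf e}_n$ to any generator never creates new cross-terms or new square-terms beyond scaling, which is exactly what makes the induction close; spelling this out carefully is the only real content. A secondary point to address explicitly is the existence of square roots for the rescaling — I would note that since $\mathrm{char}\,\K=0$ and all relevant structure constants are nonzero, the $\mu_i$ (and $\l_i$) can be chosen in $\K$ when $\K$ is algebraically closed, which is the setting of interest.
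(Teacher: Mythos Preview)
Your two–step strategy (first absorb the $a_{in}{\bf e}_n$ tails by a basis change, then rescale diagonally) can be made to work, but the write-up is tangled and you manufacture a difficulty that is not there. First, as you yourself observe, adding a multiple of ${\bf e}_n$ to ${\bf e}_i$ does \emph{not} change ${\bf e}_i^2$; the move that actually kills the tail is to redefine ${\bf e}_{i+1}$, setting $\tilde{\bf e}_{i+1}={\bf e}_{i+1}+\tfrac{a_{in}}{a_{i,i+1}}{\bf e}_n$, after which $\tilde{\bf e}_i^2=a_{i,i+1}\tilde{\bf e}_{i+1}$ and all cross products still vanish. Your description keeps switching between ``modify ${\bf e}_i$'' and ``modify ${\bf e}_{i+1}$'', and the intermediate algebra you call ${\bf E}''$ (with ${\bf e}_i^2={\bf e}_{i+1}$, coefficient $1$) is not what Step one actually produces. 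Second, the square-root concern is spurious: the recursion $\lambda_i^2=a_{i,i+1}\lambda_{i+1}$ is solved \emph{forward} as $\lambda_{i+1}=\lambda_i^{2}/a_{i,i+1}$ starting from $\lambda_1=1$, so no roots are needed and no hypothesis on $\mathbb K$ beyond $a_{i,i+1}\neq0$ enters.

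The paper's proof is quite different and much shorter. It simply defines ${\bf f}_1={\bf e}_1$ and then ${\bf f}_{i+1}={\bf f}_i^{2}$ for $i\geq 1$ (equivalently ${\bf f}_{i+1}=\big(\prod_{k=1}^{i-1}a_{k,k+1}^{2^{i-k}}\big){\bf e}_i^2$), so that ${\bf f}_i^2={\bf f}_{i+1}$ holds by construction. Because each ${\bf f}_i$ is a nonzero multiple of ${\bf e}_i$ plus a multiple of ${\bf e}_n$, the ${\bf f}_i$ are automatically linearly independent and satisfy ${\bf f}_i{\bf f}_j=0$ for $i\neq j$. This single ``iterated squaring'' step collapses your Step one and Step two into one line, avoids all the inductive bookkeeping you flag as the main obstacle, and makes it immediately clear that no square roots or closure assumptions on $\mathbb K$ are required.
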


\begin{proof} Let $a_{i,i+1}\neq0$ for every $i<n$. If $n=2$
after changing the basis ${\bf e}_1,{\bf e}_2$ to ${\bf f}_1={\bf e}_1$ and ${\bf f}_2={\bf e}_1^2$
we immediately get ${\bf{E}}'$.

So. let us suppose $n\geq3$.
Then the linear mapping $\varphi: {\bf E}\to{\bf E}'$ defined by
\begin{equation}\label{iso}
\varphi:\ \left\{
\begin{array}{lllll}
{\bf f}_1={\bf e}_1\\
{\bf f}_2={\bf e}_1^2\\
{\bf f}_{i+1}=\prod\limits_{k=1}^{i-1}a_{k,k+1}^{2^{i-k}}{\bf e}_i^2, & 2\leq i<n
\end{array}
\right.
\end{equation}
is an isomorphism from $\bf E$ to ${\bf E}'$.
\end{proof}

\section{Derivations}

In this section, we consider derivations of
nilpotent evolution algebras
with maximal index of nilpotency.

Recall that derivation of an evolution algebra $\bf{E}$ is a linear mapping
$d : \bf{E}\to\bf{E}$ such that
$d({\bf u}{\bf v}) = d({\bf u}){\bf v} + {\bf u}d({\bf v})$
for all ${\bf u}, {\bf v}\in\bf{E}$.

We note that for any algebra, the space $\Der(\bf{E})$ of all derivations is a Lie algebra w.r.t.
the commutator multiplication:
$$
[d_1,d_2]=d_1d_2-d_2d_1,\ \ \ \forall d_1,d_2\in\Der({\bf E}).
$$

\begin{lemma}\label{lem2}
Let ${\bf E}_1$, ${\bf E}_2$ be two isomorphic evolution algebras. Then $\Der({\bf E}_1)\cong\Der({\bf E}_2)$.
\end{lemma}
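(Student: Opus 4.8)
The plan is to transport derivations along a fixed isomorphism and check that the resulting map respects the Lie bracket. Concretely, let $\varphi:{\bf E}_1\to{\bf E}_2$ be an isomorphism of evolution algebras, so $\varphi$ is a linear bijection with $\varphi({\bf u}{\bf v})=\varphi({\bf u})\varphi({\bf v})$ for all ${\bf u},{\bf v}\in{\bf E}_1$. I would define a map
\[
\Phi:\Der({\bf E}_1)\to\Der({\bf E}_2),\qquad \Phi(d)=\varphi\circ d\circ\varphi^{-1}.
\]
The first step is to verify that $\Phi(d)$ really is a derivation of ${\bf E}_2$: for ${\bf x},{\bf y}\in{\bf E}_2$, write ${\bf x}=\varphi({\bf u})$, ${\bf y}=\varphi({\bf v})$ with ${\bf u},{\bf v}\in{\bf E}_1$, and compute $\Phi(d)({\bf x}{\bf y})=\varphi(d(\varphi^{-1}({\bf x}{\bf y})))=\varphi(d({\bf u}{\bf v}))$ using that $\varphi^{-1}$ is also an algebra homomorphism. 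Then apply the Leibniz rule for $d$ and push $\varphi$ back through the product, obtaining $\Phi(d)({\bf x})\,{\bf y}+{\bf x}\,\Phi(d)({\bf y})$.

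The second step is to check that $\Phi$ is a homomorphism of Lie algebras. Linearity of $\Phi$ is immediate since conjugation by a fixed linear map is linear in $d$. For the bracket, one has $\Phi([d_1,d_2])=\varphi(d_1d_2-d_2d_1)\varphi^{-1}$, and inserting $\varphi^{-1}\varphi$ between the factors gives $(\varphi d_1\varphi^{-1})(\varphi d_2\varphi^{-1})-(\varphi d_2\varphi^{-1})(\varphi d_1\varphi^{-1})=[\Phi(d_1),\Phi(d_2)]$. The third step is bijectivity: the map $d'\mapsto\varphi^{-1}\circ d'\circ\varphi$ from $\Der({\bf E}_2)$ to $\Der({\bf E}_1)$ is a two-sided inverse of $\Phi$ (the same computations as above apply with $\varphi^{-1}$ in place of $\varphi$), so $\Phi$ is an isomorphism of Lie algebras.

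None of the steps presents a real obstacle; this is a standard functoriality argument, and the only thing to be slightly careful about is recording that $\varphi^{-1}$ is again an isomorphism of evolution algebras (which is routine, since applying $\varphi^{-1}$ to $\varphi({\bf u}{\bf v})=\varphi({\bf u})\varphi({\bf v})$ yields ${\bf u}{\bf v}=\varphi^{-1}(\varphi({\bf u})\varphi({\bf v}))$, and every element of ${\bf E}_2$ is of the form $\varphi({\bf u})$). I would present the proof compactly: define $\Phi$, verify the Leibniz rule for $\Phi(d)$ in one displayed computation, note linearity and the bracket identity in a second, and conclude by exhibiting the inverse.
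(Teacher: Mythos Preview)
Your proposal is correct and follows exactly the paper's approach: the paper also fixes an isomorphism $\varphi:{\bf E}_1\to{\bf E}_2$ and defines $\psi(d)=\varphi d\varphi^{-1}$, asserting this is a Lie algebra isomorphism. Your write-up is in fact more detailed than the paper's, which leaves the verifications to the reader.
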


\begin{proof}
Let $\varphi$ be an isomorphism from ${\bf E}_1$ to ${\bf E}_2$. It is easy to check
that a linear mapping $\psi$ defined on $\Der({\bf E}_1)$ by
$$
\psi(d)=\varphi(d)\varphi^{-1}
$$
is an isomorphism of Lie algebras $\Der({\bf E}_1)$ and $\Der({\bf E}_2)$.
\end{proof}

For a given structural matrix $A=(a_{ij})_{i,j\geq1}^n$ of nilpotent evolution algebra
$\bf E$ with $dim({\bf E}^2)=n-1$ we denote
\begin{equation}\label{A_a_ijneq0}
I_A=\{(i,j):i+1<j<n,\ a_{ij}\neq0\}.
\end{equation}

\begin{thm}\label{thm_der}
Let $\bf{E}$ be an evolution algebra with structural matrix $A=(a_{ij})_{i,j\geq1}^n$ in a
natural basis
$\{{\bf e}_i\}_{i=1}^n$. If $\bf E$ is a nilpotent with
$rank A=n-1$, then the following statements hold
\begin{enumerate}
\item[$(i)$] if $I_A\neq\emptyset$ then
$$
\Der({\bf{E}})=\left\{
\left(
\begin{array}{lllll}
0 & 0 & \ldots & 0 & \beta\\
0 & 0 & \ldots & 0 & 0\\
\vdots & \vdots & \ddots & \vdots & \vdots\\
0 & 0 & \ldots & 0 & 0\\
0 & 0 & \ldots & 0 & 0
\end{array}
\right):\ \beta\in\mathbb K
\right\}
$$
\\

\item[$(ii)$]
if $I_A=\emptyset$ then
$$
\Der({\bf{E}})=\left\{
\left(
\begin{array}{lllll}
\alpha & 0 & \ldots & 0 & \beta\\
0 & 2\alpha & \ldots & 0 & (2-2^{n-1})\alpha a_{1n}\\
\vdots & \vdots & \ddots & \vdots & \vdots\\
0 & 0 & \ldots & 2^{n-2}\alpha & (2^{n-2}-2^{n-1})\alpha a_{n-2,n}\\
0 & 0 & \ldots & 0 & 2^{n-1}\alpha
\end{array}
\right):\ \alpha,\beta\in\mathbb K
\right\}
$$
\end{enumerate}
\end{thm}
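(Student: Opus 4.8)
The plan is to compute $\Der(\mathbf{E})$ directly from the defining relations by writing an arbitrary derivation $d$ as a matrix $d(\mathbf{e}_i)=\sum_k d_{ki}\mathbf{e}_k$ and imposing $d(\mathbf{e}_i^2)=2\mathbf{e}_i d(\mathbf{e}_i)$ (using commutativity, the Leibniz rule gives $d(\mathbf{e}_i\mathbf{e}_i)=d(\mathbf{e}_i)\mathbf{e}_i+\mathbf{e}_i d(\mathbf{e}_i)=2\,\mathbf{e}_i d(\mathbf{e}_i)$) together with $d(\mathbf{e}_i\mathbf{e}_j)=0$ for $i\neq j$. By Lemma \ref{lem1}, in fact it suffices to treat the normalized algebra $\mathbf{E}'$ with $\mathbf{f}_i^2=\mathbf{f}_{i+1}$, and then transport the answer back by Lemma \ref{lem2}; however, since the statement is phrased in terms of the general matrix $A$, I would keep the general $a_{ij}$ and run the computation directly on \eqref{evolalg}, which is only marginally more work. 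First I would extract the easy constraints: expanding $d(\mathbf{e}_i\mathbf{e}_j)=0$ for $i\ne j$ shows that $\mathbf{e}_i\,d(\mathbf{e}_j)+\mathbf{e}_j\,d(\mathbf{e}_i)=0$; feeding in the multiplication table \eqref{evolalg} kills all off-diagonal entries of $d$ except those in the last column, and forces $d$ to be upper triangular. This is the step that collapses the problem to a handful of unknowns: the diagonal entries $d_{11},\dots,d_{nn}$ and the last column.

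Next I would exploit the relation $d(\mathbf{e}_i^2)=2\mathbf{e}_i d(\mathbf{e}_i)$ for $i\le n-1$. Since $d(\mathbf{e}_i)=d_{ii}\mathbf{e}_i+d_{ni}\mathbf{e}_n$ after the first step (for $i<n$) and $\mathbf{e}_i\mathbf{e}_n=0$, the right-hand side is $2d_{ii}\,\mathbf{e}_i^2$. Comparing with the left-hand side $d\big(\sum_{j>i}a_{ij}\mathbf{e}_j\big)=\sum_{j>i}a_{ij}\big(d_{jj}\mathbf{e}_j+d_{nj}\mathbf{e}_n\big)$ yields, coefficient by coefficient, the recursion $a_{ij}d_{jj}=2d_{ii}a_{ij}$ for all $j$ with $i<j<n$, i.e.\ $a_{ij}(d_{jj}-2d_{ii})=0$, plus a relation in the $\mathbf{e}_n$-coefficient determining the last column. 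Taking $j=i+1$ and using $a_{i,i+1}\neq 0$ gives the chain $d_{i+1,i+1}=2d_{ii}$, hence $d_{ii}=2^{i-1}d_{11}$ for all $i$ (with the convention that $d_{11}=\alpha$); this produces the diagonal $\alpha,2\alpha,\dots,2^{n-1}\alpha$ appearing in case $(ii)$. The dichotomy between $(i)$ and $(ii)$ then emerges from the remaining relations $a_{ij}(d_{jj}-2d_{ii})=0$ for $j>i+1$: if some $a_{ij}\ne 0$ with $i+1<j<n$ — i.e.\ $I_A\ne\emptyset$ — then $d_{jj}=2d_{ii}$ clashes with $d_{jj}=2^{j-1}d_{11}$ and $d_{ii}=2^{i-1}d_{11}$ unless $d_{11}=0$, forcing $\alpha=0$ and killing the whole diagonal; one then checks the last column collapses to a single free parameter $\beta$ in the $(1,n)$ slot. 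If $I_A=\emptyset$, $\alpha$ stays free and one solves the linear recursion for the last-column entries $d_{ni}$ in terms of $\alpha$ and $a_{in}$, obtaining the closed form $(2^{i-1}-2^{n-1})\alpha a_{in}$ displayed in $(ii)$, with $d_{n1}=\beta$ free.

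The last piece is the computation of the final column in each case. The $\mathbf{e}_n$-component of $d(\mathbf{e}_i^2)=2d_{ii}\mathbf{e}_i^2$ for $i\le n-2$ reads $\sum_{j=i+1}^{n}a_{ij}d_{nj}=2d_{ii}a_{in}$ — note $d_{nn}$ contributes the term $a_{in}d_{nn}=2^{n-1}\alpha a_{in}$ — and for $i=n-1$ it reads $a_{n-1,n}d_{nn}=2d_{n-1,n-1}a_{n-1,n}$, which is exactly the already-established relation $d_{nn}=2^{n-1}\alpha=2\cdot 2^{n-2}\alpha=2d_{n-1,n-1}$, so it imposes nothing new. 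When $I_A=\emptyset$ all the intermediate $a_{ij}$ ($i+1<j<n$) vanish, so the $\mathbf{e}_n$-relation for $i\le n-2$ simplifies to $a_{i,i+1}d_{n,i+1}+a_{in}d_{nn}=2d_{ii}a_{in}$, a triangular system one solves downward to get $d_{n,i+1}$, and the boundary value $d_{n1}=:\beta$ is unconstrained; reorganizing gives $(2^{i-1}-2^{n-1})\alpha a_{in}$. When $I_A\ne\emptyset$ we have $\alpha=0$, the relations become $\sum_{j>i}a_{ij}d_{nj}=0$, and descending from $i=n-1$ (which is vacuous) one shows successively $d_{nj}=0$ for $j\geq 2$, leaving only $d_{n1}=\beta$. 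Finally I would verify the converse — that every matrix of the stated shape genuinely is a derivation — by plugging back into the Leibniz rule; this is a routine check that the recursions were solved correctly. I expect the main obstacle to be purely bookkeeping: keeping the index ranges straight in the final-column recursion (especially the off-by-one at $i=n-2$ versus $i=n-1$, and the role of the $d_{nn}$ term) and making sure the $I_A\ne\emptyset$ case cleanly forces $\alpha=0$ rather than merely constraining it — the conceptual content is entirely in the observation that $a_{i,i+1}\ne 0$ rigidifies the diagonal to the powers-of-two pattern, after which any extra nonzero structural constant is incompatible with a nonzero grading element.
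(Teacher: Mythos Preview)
Your approach is essentially identical to the paper's: both derive the off-diagonal vanishing from $d(\mathbf{e}_i\mathbf{e}_j)=0$ via the linear independence of $\mathbf{e}_i^2,\mathbf{e}_j^2$, extract the $2^{i-1}$ diagonal pattern from $d(\mathbf{e}_i^2)=2d_{ii}\mathbf{e}_i^2$ using $a_{i,i+1}\neq 0$, force $\alpha=0$ whenever some $a_{ij}\neq 0$ with $i+1<j<n$, and then solve the remaining triangular system for the $\mathbf{e}_n$-coefficients by descending induction. One bookkeeping caveat: with your convention $d(\mathbf{e}_i)=\sum_k d_{ki}\mathbf{e}_k$ the surviving off-diagonal entries $d_{ni}$ lie in the last \emph{row}, not the last column (the paper uses the transposed convention $d(\mathbf{e}_i)=\sum_j d_{ij}\mathbf{e}_j$, which is why its displayed matrices carry the free parameters in the last column), so your phrases ``last column'' and ``upper triangular'' should be adjusted, and the solved entry is $d_{n,i+1}=(2^i-2^{n-1})\alpha\, a_{in}/a_{i,i+1}$ rather than $(2^{i-1}-2^{n-1})\alpha\, a_{in}$.
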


\begin{proof} The $(i)$ and $(ii)$ are easy to check for $n=2,3$. So, we consider only the case $n>3$.
Let $d$ be a derivation. We represent $d$ in a matrix form in the basis $\{{\bf{e}}_i\}_{i=1}^n$
as follows $d({\bf{e}}_i)=\sum_{j=1}^nd_{ij}{\bf{e}}_j$.
Then, we have $d_{ji}{\bf e}_i^2+d_{ij}{\bf e}_j^2={\bf 0}$
for all $1\leq i<j\leq n$. Since ${\bf e}_i^2$ and ${\bf e}_j^2$ are linearly independent,
then $d_{ij}=d_{ji}=0$ for any $1\leq i<j<n$. If we take $j=n$,
then taking into account that ${\bf e}_n^2=\bf 0$ from $d_{ni}{\bf e}_i^2+d_{in}{\bf e}_n^2={\bf 0}$ one has $d_{ni}=0$ for any $i<n$.

Hence, we have shown the following:
\begin{equation}\label{d_ij=0}
d_{ij}=0,\ \ \ \mbox{if }i\neq j,\ i\leq n,\ j<n
\end{equation}
On the other hand, we have $d({\bf e}_{i}^2)=2d_{ii}{\bf e}_i^2$ for any $i\leq n$.
Then, for $i=n-1$ using \eqref{evolalg} we obtain
$d(a_{n-1,n}{\bf e}_n)=2d_{n-1,n-1}a_{n-1,n}{\bf e}_n$. Due to $a_{n-1,n}\neq0$ one gets
\begin{equation}\label{d_nn}
d_{nn}=2d_{n-1,n-1}
\end{equation}

Furthermore, assume that $i<n-1$.
Then, one finds
\begin{eqnarray}\label{qwe}
d({\bf e}_i^2)&=&d\left(\sum\limits_{j=i+1}^na_{ij}{\bf e}_j\right)=\sum\limits_{j=i+1}^na_{ij}d({\bf e}_j)\nonumber\\
&=&\sum_{j=i+1}^{n-1}a_{ij}d_{jj}{\bf e}_{j}+\sum_{j=i+1}^na_{ij}d_{jn}{\bf e}_n
\end{eqnarray}
On the other hand, from
$$
d({\bf e}_i^2)=2d_{ii}{\bf e}_i^2=2d_{ii}\sum\limits_{j=i+1}^na_{ij}{\bf e}_j.
$$
with \eqref{qwe} one finds

\begin{eqnarray}
\label{t1}&& 2d_{ii}=d_{i+1,i+1},\ \  1\leq i<n-1\\[2mm]
\label{t2}&& a_{ij}d_{jj}=2a_{ij}d_{ii},\ \ i+2\leq j\leq n-1\\[2mm]
\label{t3}&& \sum\limits_{j=i+1}^na_{ij}d_{jn}=2d_{ii}a_{in},\ \ 1\leq i<n-1.
\end{eqnarray}

From \eqref{t1},\eqref{t2} we can easily derive

\begin{eqnarray}
\label{tt1}d_{jj}=2^{j-1}d_{11}, & 2\leq j\leq n-1\\
\label{tt2}a_{ij}d_{11}=0, & i+2\leq j\leq n-1.
\end{eqnarray}

Now we consider \eqref{t3}. We claim:
\begin{equation}\label{d_i+1,n=matind}
d_{i+1,n}=\frac{(2^i-2^{n-1})d_{11}a_{in}}{a_{i,i+1}},\ \ \ 1\leq i\leq n-2.
\end{equation}
Let us prove it by induction.
Let $i=n-2$. Then, from \eqref{t3} one gets
$$
a_{n-2,n-1}d_{n-1,n}=(2d_{n-2,n-2}-d_{nn})a_{n-2,n}.
$$
Noting $a_{n-2,n-1}\neq0$ and plugging \eqref{d_nn} and \eqref{tt1} into the last
one, we obtain
\begin{equation}\label{d_n-1,n=}
d_{n-1,n}=\frac{(2^{n-2}-2^{n-1})d_{11}a_{n-2,n}}{a_{n-2,n-1}}.
\end{equation}
Now assume that \eqref{d_i+1,n=matind} holds for any $2\leq i\leq n-2$. Then from
\eqref{t3} for any $2\leq i\leq n-2$ one finds
\begin{eqnarray}\label{d_in}
a_{i-1,i}d_{i,n}&=&(2d_{i-1,i-1}-d_{nn})a_{i-1,n}-\sum\limits_{j=i+1}^{n-1}a_{i-1,j}d_{j,n}\nonumber\\[2mm]
&=&(2^{i-1}-2^{n-1})a_{i-1,n}-\sum\limits_{j=i+1}^{n-1}a_{i-1,j}d_{j,n}\nonumber\\[2mm]
&=&(2^{i-1}-2^{n-1})a_{i-1,n}-\sum\limits_{j=i+1}^{n-1}a_{i-1,j}\frac{(2^{j-1}-2^{n-1})d_{11}a_{j-1,n}}{a_{j-1,j}}.
\end{eqnarray}
Due to \eqref{t2} we infer $a_{i-1,j}d_{11}=0$ for any $2\leq i<j<n$.
Keeping this fact and noting $a_{i-1,i}\neq0$ from \eqref{d_in} one finds
$$
d_{i,n}=\frac{(2^{i-1}-2^{n-1})a_{i-1,n}}{a_{i-1,i}},\ \ \ 1<i<n-1.
$$
This equality together with \eqref{d_n-1,n=} implies \eqref{d_i+1,n=matind} for any
$1\leq i\leq n-2$.

So, from \eqref{d_ij=0},\eqref{d_nn},\eqref{tt1},\eqref{tt2} and \eqref{d_i+1,n=matind} we
conclude that $d$ is a derivation of evolution algebra given by \eqref{evolalg}
if and only if

\begin{eqnarray}
\label{ttt1}d_{ij}=d_{ni}=0, & 1\leq i\neq j\leq n-1;\\[2mm]
\label{ttt2}d_{ii}=2^{i-1}d_{11}, & 2\leq i\leq n;\\[2mm]
\label{ttt3}a_{ij}d_{11}=0, & i+2\leq j\leq n-1;\\[2mm]
\label{ttt4}d_{i+1,n}=(2^i-2^{n-1})d_{11}a_{in}, & 1\leq i\leq n-2.
\end{eqnarray}

{\bf Case $I_A\neq\emptyset$}. In this case, we have $a_{i_0j_0}\neq0$ for some pair $(i_0,j_0)$
satisfying $i_0+2\leq j_0<n$.
Then, from \eqref{ttt3} one finds $d_{11}=0$. Plugging this fact into \eqref{ttt2}
and \eqref{ttt4} we obtain
$$
\Der({\bf{E}})=\left\{
\left(
\begin{array}{lllll}
0 & 0 & \ldots & 0 & \beta\\
0 & 0 & \ldots & 0 & 0\\
\vdots & \vdots & \ddots & \vdots & \vdots\\
0 & 0 & \ldots & 0 & 0\\
0 & 0 & \ldots & 0 & 0
\end{array}
\right):\ \beta\in\mathbb K
\right\}.
$$

{\bf Case $I_A=\emptyset$}. In this case \eqref{ttt3} is true for any $d_{11}\in\mathbb K$.
So, from \eqref{ttt1},\eqref{ttt2} and \eqref{ttt4} we conclude that
$$
\Der({\bf{E}})=\left\{
\left(
\begin{array}{lllll}
\alpha & 0 & \ldots & 0 & \beta\\
0 & 2\alpha & \ldots & 0 & (2-2^{n-1})\alpha a_{1n}\\
\vdots & \vdots & \ddots & \vdots & \vdots\\
0 & 0 & \ldots & 2^{n-2}\alpha & (2^{n-2}-2^{n-1})\alpha a_{n-2,n}\\
0 & 0 & \ldots & 0 & 2^{n-1}\alpha
\end{array}
\right):\ \alpha,\beta\in\mathbb K
\right\}.
$$
This completes the proof.
\end{proof}

\begin{rk}
From the proved theorem we infer that $1\leq\dim Der({\bf E})\leq2$. This kind of result
could be proved using Jacobson \cite{Jacob}. But the advantage of Theorem \ref{thm_der}
is that it fully describes structure of the derivations in the natural basis.
\end{rk}

Now it is natural to consider the following question: if the $Der({\bf E})$ is given
is it possible to reconstruct a nilpotent evolution algebra ${\bf E}$. To solve this question we need
an auxiliary fact.

\begin{lemma}\label{lem3}
Let $\bf E$ be an evolution algebra with a natural basis $\{{\bf e}_k\}_{k=1}^n$ and multiplaction
table:
\begin{equation}\label{eval4}
{\bf{e}}_i^2=\left\{
\begin{array}{ll}
a_{i,i+1}{\bf{e}}_{i+1}+a_{in}{\bf{e}}_n, & i<n-1\\
a_{n-1,n}{\bf{e}}_n, & i=n-1\\
{\bf0}, & i=n.
\end{array}
\right.
\end{equation}
If $\dim({\bf E}^2)<n-1$ then the Lie algebra
$\Der({\bf E})$ has dimension more than two.
\end{lemma}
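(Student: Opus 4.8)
The plan is to exhibit a subspace of $\Der({\bf E})$ of dimension at least three. The main tool is the notion of an \emph{annihilator derivation}: writing $\ann({\bf E})=\{x\in{\bf E}:x{\bf E}={\bf 0}\}$, for any $z\in\ann({\bf E})$ and any linear functional $f\colon{\bf E}\to\K$ with $f|_{{\bf E}^2}=0$ the map $d_{z,f}\colon x\mapsto f(x)z$ is a derivation, since $d_{z,f}({\bf u}{\bf v})=f({\bf u}{\bf v})z={\bf 0}$ and $d_{z,f}({\bf u}){\bf v}+{\bf u}\,d_{z,f}({\bf v})=f({\bf u})z{\bf v}+f({\bf v}){\bf u}z={\bf 0}$. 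The assignment $z\otimes f\mapsto d_{z,f}$ embeds $\ann({\bf E})\otimes({\bf E}/{\bf E}^2)^{*}$ into $\Der({\bf E})$, so $\dim\Der({\bf E})\ge\dim\ann({\bf E})\cdot\codim{\bf E}^2$. For the algebra \eqref{eval4} one checks that ${\bf e}_n\in\ann({\bf E})=\spa\{{\bf e}_j:{\bf e}_j^2={\bf 0}\}$ and ${\bf E}^2\subseteq\spa\{{\bf e}_2,\dots,{\bf e}_n\}$, so the hypothesis $\dim{\bf E}^2<n-1$ gives $\codim{\bf E}^2\ge 2$.

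If $\dim\ann({\bf E})\cdot\codim{\bf E}^2\ge 3$, the first paragraph already finishes the proof. Since $\codim{\bf E}^2\ge 2$ always, the only remaining possibility is $\dim\ann({\bf E})=1$ and $\dim{\bf E}^2=n-2$. In that case ${\bf e}_j^2\ne{\bf 0}$ for every $j<n$; in particular $a_{n-1,n}\ne0$, and $a_{jn}\ne0$ whenever $1\le j\le n-2$ and $a_{j,j+1}=0$. A short rank computation then shows $\dim{\bf E}^2=(n-1)-k$, where $k$ is the number of vanishing entries among $a_{1,2},\dots,a_{n-2,n-1}$: the ${\bf e}_i^2$ with $a_{i,i+1}\ne0$ have pairwise distinct leading basis vectors ${\bf e}_{i+1}$ and, together with ${\bf e}_n$, are linearly independent, while every other ${\bf e}_i^2$ (and ${\bf e}_{n-1}^2$) lies in $\spa\{{\bf e}_n\}$. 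Hence $\dim{\bf E}^2=n-2$ forces $k=1$, i.e.\ there is a unique $p$ with $1\le p\le n-2$ and $a_{p,p+1}=0$, and moreover $a_{pn}\ne 0$.

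For this $p$ I would verify that the linear maps $d$ given by $d({\bf e}_i)=\mu_i{\bf e}_i+c_i{\bf e}_n$ for $i<n$ and $d({\bf e}_n)=\mu_n{\bf e}_n$ are derivations exactly when $\mu_{i+1}=2\mu_i$ for $i\in\{1,\dots,n-2\}\setminus\{p\}$, $\mu_n=2\mu_p=2\mu_{n-1}$, and $c_{i+1}=(2\mu_i-\mu_n)a_{in}/a_{i,i+1}$ for $i\in\{1,\dots,n-2\}\setminus\{p\}$. Indeed, comparing ${\bf e}_{i+1}$-components in $d({\bf e}_i^2)=2\mu_i{\bf e}_i^2$ produces the recursion on the $\mu_i$, which — since $a_{i,i+1}\ne0$ for $i\ne p$ — together with $\mu_n=2\mu_p$ and $\mu_n=2\mu_{n-1}$ determines $\mu_2,\dots,\mu_n$ from $\mu_1$; comparing ${\bf e}_n$-components yields the stated $c_{i+1}$, and $c_1$ and $c_{p+1}$ are unconstrained because ${\bf e}_1$ appears in no ${\bf e}_i^2$ while ${\bf e}_{p+1}$ appears only in ${\bf e}_p^2$, with coefficient $a_{p,p+1}=0$. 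The conditions $d({\bf e}_i){\bf e}_j+{\bf e}_i d({\bf e}_j)={\bf 0}$ for $i\ne j$ hold automatically, since every product of two distinct basis vectors vanishes, $d({\bf e}_i)$ is a combination of ${\bf e}_i$ and ${\bf e}_n$, and ${\bf e}_n^2={\bf 0}$. As $\mu_1,c_1,c_{p+1}$ are exactly the matrix entries $d_{11},d_{1n},d_{p+1,n}$ of $d$, the triple $(\mu_1,c_1,c_{p+1})$ parametrizes a genuinely three-dimensional subspace of $\Der({\bf E})$, completing the proof.

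The derivation checks and the rank count are routine. The delicate step is the last case: one must know that $a_{1,2},\dots,a_{n-2,n-1}$ has a single zero entry, for otherwise the recursion for the $\mu_i$ and the formulas for the $c_i$ would involve division by a vanishing $a_{i,i+1}$; this single-zero property is precisely what the identity $\dim{\bf E}^2=(n-1)-k$ combined with $\dim\ann({\bf E})=1$ delivers. (The tiny cases are subsumed: e.g.\ for $n=2$ the hypothesis forces the zero product, so $\ann({\bf E})={\bf E}$, and the first paragraph gives $\dim\Der({\bf E})\ge 4$.)
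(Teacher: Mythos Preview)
Your proof is correct and takes a genuinely different route from the paper's. The paper argues by a direct case analysis: it partitions $\{1,\dots,n-1\}$ into $I_1=\{i:a_{i,i+1}\ne0\}$, $I_2=\{i:a_{i,i+1}=0,\ a_{in}\ne0\}$, $I_3=\{i:a_{i,i+1}=0,\ a_{in}=0\}$, writes out the full system of equations for an arbitrary derivation, and in each of the resulting subcases ($I_1=\emptyset$; $I_1\ne\emptyset,\ I_3=\emptyset$; $I_1\ne\emptyset,\ I_3\ne\emptyset$ with two further subcases on whether $n-1\in I_1$) exhibits three independent free entries among the $d_{ij}$. Your argument instead isolates the structural reason most cases are easy---the lower bound $\dim\Der({\bf E})\ge\dim\ann({\bf E})\cdot\codim{\bf E}^2$ coming from maps $x\mapsto f(x)z$---and then deals by hand with the single residual situation $\dim\ann({\bf E})=1$, $\dim{\bf E}^2=n-2$, which you correctly reduce to ``exactly one $a_{p,p+1}$ vanishes and $a_{pn}\ne0$''. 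In that case your diagonal-plus-last-column ansatz gives the three free parameters $\mu_1,c_1,c_{p+1}$, and the verification is complete. What your approach buys is a clean conceptual explanation (the annihilator--cokernel pairing) that avoids four separate computational cases; what the paper's approach buys is an explicit description of the full derivation equations, which it reuses elsewhere. Both are valid; yours is shorter.
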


\begin{proof} For the evolution algebra
\eqref{eval4} let us denote
$$
\begin{array}{ll}
I=\{1,2,\dots,n-1\},\\
I_1=\{i\in I: a_{i,i+1}\neq0\},\\
I_2=\{i\in I: a_{i,i+1}=0,\ a_{in}\neq0\},\\
I_3=\{i\in I: a_{i,i+1}=0,\ a_{in}=0\}.
\end{array}
$$
It is clear that $I=I_1\cup I_2\cup I_3$ and $I_i\cap I_j=\emptyset$ for $i\neq j$. We
note that $n-1\not\in I_2$.
Since $\dim({\bf E}^2)<n-1$ we have $I_2\cup I_3\neq\emptyset$. Take an arbitrary derivation $d$ of the algebra ${\bf E}$. Then, due to $d({\bf e}_i^2)=2d_{ii}{\bf e}_i^2$ for all $i\in I$, we obtain
\begin{equation}\label{It1}
\begin{array}{ll}
a_{i,i+1}d_{i+1,i+1}+a_{in}d_{n,i+1}=2a_{i,i+1}d_{ii}, & i<n-1\\
a_{i,i+1}d_{i+1,n}+a_{in}d_{nn}=2a_{in}d_{ii}, & i<n-1\\
a_{i,i+1}d_{i+1,k}+a_{in}d_{nk}=0, & i<n-1,\ k\not\in\{i+1,n\}\\
a_{n-1,n}d_{nn}=2a_{n-1,n}d_{n-1,n-1}\\
a_{n-1,n}d_{nk}=0, & k<n.
\end{array}
\end{equation}
Furthermore, from $d({\bf e}_i{\bf e}_j)=\bf 0$ for all $i\neq j$ one has
\begin{equation}\label{It2}
\begin{array}{ll}
a_{i,i+1}d_{ji}=0, & i<n-1,\ j<n,\\
a_{in}d_{ji}+a_{jn}d_{ij}=0, & i,j\in I\\
d_{ni}=0, & i\not\in I_3
\end{array}
\end{equation}

{\bf Case $I_1=\emptyset$}. In this case from \eqref{It1} and \eqref{It2} one gets
$$
\begin{array}{ll}
d_{n,i+1}=0, & i\in I_2\\
d_{nn}=2d_{ii}, & i\in I_2\\
d_{nk}=0, & i\in I_2,\ k\not\in\{i+1,n\}\\
d_{ji}=-\frac{a_{jn}}{a_{in}}d_{ij}, & i,j\in I_2,\ i\neq j\\
d_{ji}=0, & i\in I_2,\ j\in I_3\\
d_{ni}=0, & i\in I_2.
\end{array}
$$
According to $n-1\not\in I_2$, we infer that
there exist at least three free variables: $d_{1n},d_{nn}$ and $d_{n-1,n-1}$, which
means
that $dim(\Der({\bf E}))\geq3$.

{\bf Case $I_1\neq\emptyset,\ I_3=\emptyset$}.  In this case, we have $I_2\neq\emptyset$.
We note that $n-1\in I_1$, i.e., $a_{n-1,n}\neq0$.
Then, from
\eqref{It1},\eqref{It2} for all $i\in I_1$ one gets
$$
\begin{array}{ll}
d_{i+1,i+1}=2d_{ii}, & i\in I_1\\
d_{i+1,n}=\frac{a_{in}}{a_{i,i+1}}(2d_{ii}-d_{nn}), & i\in I_1\setminus\{n-1\}\\
d_{i+1,k}=0, & i\in I_1,\ \forall k\not\in\{i+1,n\}\\
d_{ji}=0, & i\in I_1\setminus\{n-1\},\ j\in I,\ j\neq i\\
a_{jn}d_{ij}+a_{in}d_{ji}=0, & i,j\in I,\ j\neq i\\
d_{nn}=2d_{ii}, & i\in I_2
\end{array}
$$
From last ones we conclude that again one can find at least three values $d_{1n}, d_{nn}$ and
$d_{i_0+1,n}$ for some $i_0\in I_2$, which are free variables, hence, $dim(\Der({\bf E}))\geq3$.

{\bf Case $I_1\neq\emptyset,\ I_3\neq\emptyset$}. First, we suppose that $n-1\in I_1$, i.e.,
$a_{n-1,n}\neq0$. Then from \eqref{It1},\eqref{It2} we obtain
$$
\begin{array}{ll}
d_{i+1,i+1}=2d_{ii}, & i\in I_1\\
d_{i+1,n}=\frac{a_{in}}{a_{i,i+1}}(2d_{ii}-d_{nn}), & i\in I_1\setminus\{n-1\}\\
d_{i+1,k}=0, & i\in I_1,\ k\not\in\{i+1,n\}\\
d_{ji}=0, & i\in I_1\setminus\{n-1\},\ j\in I,\ i\neq j\\
a_{in}d_{ji}+a_{jn}d_{ij}=0, & i,j\in I,\ i\neq j\\
d_{nn}=2d_{ii}, & i\in I_2
\end{array}
$$

From last ones we conclude that there exist
at least three values $d_{1n}, d_{nn}$ and
$d_{i_0+1,n}$ for some $i_0\in I_3$, which are free variables. Hence, $dim(\Der({\bf E}))\geq3$.

Let us assume that $n-1\in I_3$, i.e., $a_{n-1,n}=0$.
Then from \eqref{It1},\eqref{It2} we obtain
$$
\begin{array}{ll}
d_{i+1,i+1}=2d_{ii}-\frac{a_{in}}{a_{i,i+1}}d_{n,i+1}, & i\in I_1\\
d_{i+1,n}=\frac{a_{in}}{a_{i,i+1}}(2d_{ii}-d_{nn}), & i\in I_1\\
d_{i+1,k}=-\frac{a_{in}}{a_{i,i+1}}d_{nk}, & i\in I_1,\ k\not\in\{i+1,n\}\\
d_{ji}=0, & i\in I_1,\ j\in I,\ i\neq j\\
a_{in}d_{ji}+a_{jn}d_{ij}=0, & i,j\in I,\ i\neq j\\
d_{n,i+1}=0, & i\in I_2\\
d_{nk}=0, & k<n,\ k-1\not\in I_2\\
d_{ni}=0, & i\not\in I_3\\
d_{nn}=2d_{ii}, & i\in I_2
\end{array}
$$
If $I_2=\emptyset$ then we have at least three values $d_{1n}, d_{11}$ and $d_{nn}$ which are independent.
If $I_2\neq\emptyset$ then we have at least three values $d_{1n}, d_{nn}$ and $d_{i_0+1,n}$ for $i_0\in I_2$
which are independent.
\end{proof}

Now we are ready to formulate result related to the posed question.

\begin{thm}
Let $\bf{E}$ be an evolution algebra with a natural basis $\{{\bf e}_i\}_{i=1}^n$.
Assume that
\begin{equation}\label{teskari_Der}
\Der({\bf{E}})=\left\{
\left(
\begin{array}{lllll}
\alpha & 0 & \ldots & 0 & \beta\\
0 & 2\alpha & \ldots & 0 & \alpha d_1\\
\vdots & \vdots & \ddots & \vdots & \vdots\\
0 & 0 & \ldots & 2^{n-2}\alpha & \alpha d_{n-2}\\
0 & 0 & \ldots & 0 & 2^{n-1}\alpha
\end{array}
\right):\ \alpha,\beta\in\mathbb K
\right\}
\end{equation}
where $\{d_i\}_{i=1}^{n-2}$ are fixed numbers. Then ${\bf E}$ is a nilpotent evolution algebra
with maximal index of nilpotency. Moreover, its multiplication table is given by
\begin{equation}\label{eval2}
{\bf e}_i^2=\left\{
\begin{array}{lll}
a_{i,i+1}{\bf e}_{i+1}+\frac{a_{i,i+1}d_{i+1}}{2^i-2^{n-1}}{\bf e}_n, & i<n-2\\
a_{n-1,n}{\bf e}_n, & i=n-1\\
{\bf 0}, & i=n
\end{array}
\right.
\end{equation}
Here $a_{i,i+1}\neq0$ for every $i<n$.
\end{thm}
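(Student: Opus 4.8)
The plan is to run the implication in the reverse direction to Theorem \ref{thm_der}: start from an algebra $\bf E$ whose derivation algebra has the prescribed two-dimensional shape, and extract enough structural information to pin down the multiplication table. First I would observe that $\dim\Der({\bf E})=2$ forces, via the contrapositive of Lemma \ref{lem3}, that $\dim({\bf E}^2)\ge n-1$; combined with the fact that $\bf E$ is visibly nilpotent (the matrix in \eqref{teskari_Der} being nonzero on the diagonal is the $\alpha$-derivation, which is invertible off a proper subspace only if the algebra is nil — more carefully, the presence of a derivation with eigenvalues $\alpha,2\alpha,\dots,2^{n-1}\alpha$ on a natural basis means that $\bf E$ is graded by these weights and hence nilpotent), we get $\dim({\bf E}^2)=n-1$. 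By Theorem \ref{thm_ro} and the paragraph following it, $\bf E$ then has a natural basis in which the multiplication is of the form \eqref{evolalg} with $a_{i,i+1}\neq0$ for all $i<n$, and after the change of basis in Lemma \ref{lem1} (whose effect on $\Der$ is controlled by Lemma \ref{lem2}) I may reduce to \eqref{eval4}, i.e. assume $a_{ij}=0$ whenever $i+2\le j\le n-1$.

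Next I would apply Theorem \ref{thm_der} to this normalized $\bf E$. Since $\bf E$ is now of the form \eqref{evolalg} with all superdiagonal entries nonzero and with $I_A=\emptyset$ (that is exactly what the normalization $a_{ij}=0$ for $i+2\le j\le n-1$ says, since those are the only entries that could put a pair into $I_A$), part $(ii)$ of Theorem \ref{thm_der} gives
$$
\Der({\bf E})=\left\{\left(\begin{array}{lllll}
\alpha & 0 & \ldots & 0 & \beta\\
0 & 2\alpha & \ldots & 0 & (2-2^{n-1})\alpha a_{1n}\\
\vdots & \vdots & \ddots & \vdots & \vdots\\
0 & 0 & \ldots & 2^{n-2}\alpha & (2^{n-2}-2^{n-1})\alpha a_{n-2,n}\\
0 & 0 & \ldots & 0 & 2^{n-1}\alpha
\end{array}\right):\ \alpha,\beta\in\mathbb K\right\}.
$$
Comparing this, entry by entry in the last column, with the hypothesized form \eqref{teskari_Der} forces $(2^i-2^{n-1})a_{in}=d_{i+1}$ for $1\le i\le n-2$, i.e.
$$
a_{in}=\frac{d_{i+1}}{2^i-2^{n-1}},\qquad 1\le i\le n-2.
$$
Substituting back into \eqref{evolalg} (where now $a_{ij}=0$ for $i+2\le j\le n-1$, so the only surviving terms in ${\bf e}_i^2$ for $i<n-1$ are the $a_{i,i+1}$ and $a_{in}$ ones) yields precisely $(2^i-2^{n-1})^{-1}a_{i,i+1}d_{i+1}$ as the coefficient of ${\bf e}_n$, which is \eqref{eval2}. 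That $\bf E$ has maximal index of nilpotency $2^{n-1}+1$ is then immediate from Theorem \cite{clor}, since its table is of the form \eqref{evolalg}.

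The main obstacle, and the step I would spend the most care on, is the very first one: legitimately inferring $\dim({\bf E}^2)=n-1$ and the nilpotency of $\bf E$ purely from the \emph{shape} of $\Der({\bf E})$ in \eqref{teskari_Der}. One has to argue that a natural basis realizing this derivation matrix must be a weight basis for the semisimple derivation $d_\alpha=\mathrm{diag}(1,2,\dots,2^{n-1})$; since all weights are positive, the algebra is $\mathbb{Z}_{>0}$-graded, hence nilpotent, and then Theorem \ref{thm_ro} applies. The subtlety is that \eqref{teskari_Der} is stated in a given natural basis, so one needs to check this basis is compatible with the grading — which it is, because the off-diagonal part of every derivation in \eqref{teskari_Der} lives only in the $({\bf e}_i,{\bf e}_n)$ slots and $d_\alpha$ is already diagonal there. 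A secondary point to verify is that the normalization of Lemma \ref{lem1} does not disturb the hypothesis \eqref{teskari_Der} in a way that breaks the argument: by Lemma \ref{lem2} it only changes $\Der$ by a Lie-algebra isomorphism, and one should note the explicit isomorphism \eqref{iso} is diagonal in these bases, so the \emph{matrix} form $(ii)$ of Theorem \ref{thm_der} is directly comparable to \eqref{teskari_Der} after possibly rescaling the parameters $d_i$ — a bookkeeping check, not a conceptual one. Everything after the grading argument is routine linear algebra.
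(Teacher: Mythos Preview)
Your proposal has two genuine gaps.

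First, you invoke the contrapositive of Lemma \ref{lem3} to force $\dim({\bf E}^2)\ge n-1$ \emph{before} knowing anything about the multiplication table. But Lemma \ref{lem3} is stated only for evolution algebras whose table already has the restricted form \eqref{eval4} (only superdiagonal and last-column entries possibly nonzero); at the stage where you apply it you have not established this, so the lemma is simply unavailable. The paper invokes Lemma \ref{lem3} only \emph{after} first proving, directly in the given basis, that the table has the form \eqref{eval3}.

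Second, your chain of basis changes through Theorem \ref{thm_ro} and Lemma \ref{lem1} severs the link with the given natural basis $\{{\bf e}_i\}$. Both the hypothesis \eqref{teskari_Der} and the conclusion \eqref{eval2} refer to that specific basis, so you cannot pass to another one and then ``compare entry by entry''. Your claim that the isomorphism \eqref{iso} is diagonal is also false: ${\bf f}_{i+1}$ is a scalar multiple of ${\bf e}_i^2=a_{i,i+1}{\bf e}_{i+1}+a_{in}{\bf e}_n$, so the change-of-basis matrix has a nontrivial last column and conjugating $\Der({\bf E})$ by it does more than rescale the parameters $d_i$.

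The paper avoids both issues by never leaving the given basis. It fixes a derivation $d$ with $d_{11}\ne0$ and uses $d({\bf e}_i^2)=2d_{ii}{\bf e}_i^2$ (valid because ${\bf e}_i{\bf e}_n=0$ in a natural basis) to obtain $a_{ij}(d_{jj}-2d_{ii})=0$ for $j<n$; since $d_{jj}=2^{j-1}d_{11}$ this kills every $a_{ij}$ with $j\ne i+1$, $j<n$, and a separate check gives $a_{nn}=0$. Now the table has the form \eqref{eval3} \emph{in the original basis}, Lemma \ref{lem3} legitimately gives $a_{i,i+1}\ne0$, and the last-column relation $\sum_k a_{ik}d_{kn}=2a_{in}d_{ii}$ solves for $a_{in}$ in terms of the $d_i$. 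Your grading intuition is exactly the right picture behind this computation---it is an eigenvalue comparison for the semisimple derivation---but it must be carried out directly in the given basis rather than through abstract isomorphisms.
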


\begin{proof}
Let $\bf{E}$ be an evolution algebra with structural matrix $A=(a_{ij})_{i,j\geq1}^n$. Suppose that
$\Der({\bf{E}})$ is given by \eqref{teskari_Der}. Let us fix an arbitrary derivation $d\in\Der({\bf{E}})$ such that
$d_{11}\neq0$. Then from $d({\bf{e}}_i^2)=2d_{ii}{\bf e}_i^2$, we have
$$
\sum_{j=1}^{n-1}a_{ij}d_{jj}{\bf e}_j+\sum_{j=1}^{n}a_{ij}d_{jn}{\bf e}_n=2d_{ii}\sum_{j=1}^na_{ij}{\bf e}_j.
$$
From the last equality for any $i\leq n$ one finds
\begin{equation}\label{ad=2da}
\begin{array}{ll}
a_{ij}d_{jj}=2a_{ij}d_{ii}, & i\neq j<n\\[2mm]
\sum\limits_{k=1}^na_{ik}d_{kn}=2a_{in}d_{ii}
\end{array}
\end{equation}
Since $d_{jj}=2^{j-i}d_{ii}$ from the first equality of \eqref{ad=2da} one gets
$a_{ij}d_{ii}=0$. Noting $d_{11}\neq0$ we have
\begin{equation}\label{a_ij=0}
a_{ij}=0,\ \ \ \forall j\not\in\{i+1,n\}
\end{equation}
Rewrite the second equality of \eqref{ad=2da} for $i=n$ as
\begin{equation}\label{t1t1}
\sum_{k=1}^na_{nk}d_{kn}=2a_{nn}d_{nn}.
\end{equation}
Putting \eqref{a_ij=0} into \eqref{t1t1} we find $a_{nn}=0$.

So, we have shown that a multiplication table of ${\bf E}$ is given by
\begin{equation}\label{eval3}
{\bf{e}}_i^2=\left\{
\begin{array}{ll}
a_{i,i+1}{\bf{e}}_{i+1}+a_{in}{\bf{e}}_n, & i<n-1\\
a_{n-1,n}{\bf{e}}_n, & i=n-1\\
{\bf0}, & i=n.
\end{array}
\right.
\end{equation}
It is clear that $dim(\Der({\bf E}))=2$, then due to Lemma \ref{lem3} one has $a_{i,i+1}\neq0$ for any $i<n$.
Applying this fact, Theorem \ref{thm_der} in the second equality of \eqref{ad=2da} one gets
$$
a_{in}=\frac{a_{i,i+1}d_{i+1,n}}{2d_{ii}-d_{nn}}=\frac{a_{i,i+1}d_{i}}{2^i-2^{n-1}},\ \ i<n-1
$$
Finally putting the last one into \eqref{eval3} we obtain \eqref{eval2}.

This completes the proof.
\end{proof}

Due to Lemmas \ref{lem1} and \ref{lem2} from the last theorem we obtain the following results:

\begin{cor}
If the derivation algebra of evolution algebras is given by \eqref{teskari_Der}, then these evolution
algebras are isomorphic.
\end{cor}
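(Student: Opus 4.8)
The plan is to reduce everything to Lemma~\ref{lem1} by first invoking the preceding theorem. Suppose ${\bf E}_1$ and ${\bf E}_2$ are two evolution algebras, each equipped with a natural basis in which its derivation algebra has the form \eqref{teskari_Der}. Applying the last theorem to each of them separately, we conclude that ${\bf E}_1$ and ${\bf E}_2$ are both nilpotent evolution algebras of maximal index of nilpotency, and that in the respective natural bases their multiplication tables are of the shape \eqref{eval2}; in particular, for each of them the off-diagonal superdiagonal structural constants $a_{i,i+1}$ are all nonzero for $i<n$.

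Next I would observe that the multiplication table \eqref{eval2} is a special instance of the left-hand table appearing in Lemma~\ref{lem1}: it has the form ${\bf e}_i^2=a_{i,i+1}{\bf e}_{i+1}+a_{in}{\bf e}_n$ for $i<n-1$, ${\bf e}_{n-1}^2=a_{n-1,n}{\bf e}_n$, and ${\bf e}_n^2={\bf 0}$ (here $a_{in}=\tfrac{a_{i,i+1}d_{i+1}}{2^i-2^{n-1}}$, which plays no role). Since all $a_{i,i+1}\neq0$, the hypothesis of Lemma~\ref{lem1} is satisfied, so each ${\bf E}_k$ ($k=1,2$) is isomorphic to the single fixed evolution algebra ${\bf E}'$ defined by ${\bf f}_i^2={\bf f}_{i+1}$ for $i<n$ and ${\bf f}_n^2={\bf 0}$. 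Consequently ${\bf E}_1\cong{\bf E}'\cong{\bf E}_2$, and transitivity of isomorphism yields ${\bf E}_1\cong{\bf E}_2$, which is the assertion. (One may additionally record, via Lemma~\ref{lem2}, that this forces $\Der({\bf E}_1)\cong\Der({\bf E}_2)$ as Lie algebras, consistent with the hypothesis.)

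There is essentially no obstacle: the corollary is a straightforward assembly of the previous theorem (which pins down the multiplication table from the shape of $\Der({\bf E})$) together with the normalization Lemma~\ref{lem1}. The only point worth a sentence of care is that the conclusion does not even require the parameters $\{d_i\}$ of \eqref{teskari_Der} to coincide for ${\bf E}_1$ and ${\bf E}_2$: whatever they are, Lemma~\ref{lem1} collapses every such algebra onto the same canonical model ${\bf E}'$, so all of them lie in one isomorphism class.
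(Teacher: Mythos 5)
Your proof is correct and follows exactly the route the paper intends: apply the preceding theorem to pin down the multiplication table \eqref{eval2} with all $a_{i,i+1}\neq0$, then invoke Lemma~\ref{lem1} to identify each such algebra with the canonical model ${\bf E}'$, and conclude by transitivity. This matches the paper's own (briefly indicated) argument, so nothing further is needed.
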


\begin{cor} The
Lie algebras
$$
{\bf E}=\left\{
\left(
\begin{array}{lllll}
\alpha & 0 & \vdots & 0 & \beta\\
0 & 2\alpha & \vdots & 0 & 0\\
\vdots & \vdots & \ddots & \vdots & \vdots\\
0 & 0 & \vdots & 2^{n-2}\alpha & 0\\
0 & 0 & \vdots & 0 & 2^{n-1}\alpha
\end{array}\right):\ \alpha,\beta\in\mathbb K
\right\}
$$
and
$$
{\bf E}'=\left\{
\left(
\begin{array}{lllll}
\alpha & 0 & \vdots & 0 & \beta\\
0 & 2\alpha & \vdots & 0 & \alpha d_1\\
\vdots & \vdots & \ddots & \vdots & \vdots\\
0 & 0 & \vdots & 2^{n-2}\alpha & \alpha d_{n-2}\\
0 & 0 & \vdots & 0 & 2^{n-1}\alpha
\end{array}
\right):\ \alpha,\beta\in\mathbb K
\right\}
$$
are isomorphic for any $d_{i}\in\mathbb K, i=\overline{1,n-2}$.
\end{cor}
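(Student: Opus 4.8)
The plan is to identify both Lie algebras ${\bf E}$ and ${\bf E}'$ with the derivation algebras of concrete nilpotent evolution algebras of maximal nilindex, and then to push an isomorphism of those evolution algebras up to their derivation algebras by means of Lemma \ref{lem2}.

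First I would realize ${\bf E}$. The evolution algebra ${\bf A}$ given on a natural basis $\{{\bf e}_i\}_{i=1}^n$ by ${\bf e}_i^2={\bf e}_{i+1}$ for $i<n$ and ${\bf e}_n^2={\bf 0}$ has all structural constants $a_{ij}$ equal to zero except $a_{i,i+1}=1$, so $I_A=\emptyset$, and Theorem \ref{thm_der}$(ii)$ gives $\Der({\bf A})={\bf E}$, the $(i+1,n)$ entries being $(2^i-2^{n-1})\alpha a_{in}=0$. Next I would realize ${\bf E}'$: for the prescribed scalars $d_1,\dots,d_{n-2}$, take the evolution algebra ${\bf A}'$ with ${\bf e}_i^2={\bf e}_{i+1}+a_{in}{\bf e}_n$ for $i<n-1$, ${\bf e}_{n-1}^2={\bf e}_n$ and ${\bf e}_n^2={\bf 0}$, where the scalar $a_{in}$ is chosen so that $(2^i-2^{n-1})a_{in}=d_i$ for $1\le i\le n-2$; such a choice exists precisely because $\mathrm{char}\,\mathbb{K}=0$, so $2^i-2^{n-1}\ne 0$. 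Here again $a_{i,i+1}=1\ne 0$ and $a_{ij}=0$ for $i+2\le j<n$, hence $I_{A'}=\emptyset$, and Theorem \ref{thm_der}$(ii)$ yields that the $(i+1,n)$ entry of a derivation of ${\bf A}'$ equals $(2^i-2^{n-1})\alpha a_{in}=\alpha d_i$; that is, $\Der({\bf A}')={\bf E}'$.

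With both realizations at hand the conclusion is quick. Since ${\bf A}$ and ${\bf A}'$ both satisfy $a_{i,i+1}\ne 0$ for every $i<n$, Lemma \ref{lem1} applies to each of them and exhibits isomorphisms of both onto the evolution algebra determined by ${\bf f}_i^2={\bf f}_{i+1}$ ($i<n$) and ${\bf f}_n^2={\bf 0}$; in particular ${\bf A}\cong{\bf A}'$. By Lemma \ref{lem2} such an isomorphism induces a Lie algebra isomorphism $\Der({\bf A})\cong\Der({\bf A}')$, i.e. ${\bf E}\cong{\bf E}'$, which is the assertion.

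The step I expect to be the main obstacle is the realization of ${\bf E}'$ — checking that the explicit evolution algebra above has derivation algebra exactly equal to ${\bf E}'$. This is a direct but slightly fiddly application of the $I_A=\emptyset$ branch of Theorem \ref{thm_der}, the crucial point being that the $(i+1,n)$ entry of a derivation matrix is governed by the single structural constant $a_{in}$, so that the correspondence $(a_{in})\leftrightarrow(d_i)$ is a bijection of $\mathbb{K}^{n-2}$ (using $\mathrm{char}\,\mathbb{K}=0$). Once this is settled, Lemmas \ref{lem1} and \ref{lem2} are applied verbatim with no further computation. Alternatively, one could quote the theorem preceding this corollary (realizing ${\bf E}'$ via the multiplication table \eqref{eval2}) together with the corollary immediately above it, and then invoke Lemma \ref{lem2} in the same way.
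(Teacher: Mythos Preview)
Your proposal is correct and follows essentially the same route the paper indicates: the corollary is stated immediately after the sentence ``Due to Lemmas \ref{lem1} and \ref{lem2} from the last theorem we obtain the following results'', with no further proof given. Your argument makes this explicit---realizing ${\bf E}$ and ${\bf E}'$ as $\Der({\bf A})$ and $\Der({\bf A}')$ for evolution algebras of the shape \eqref{eval4} with $I_A=\emptyset$ (equivalently, of the shape \eqref{eval2}), then applying Lemma \ref{lem1} and Lemma \ref{lem2}---is exactly what the paper has in mind, and your alternative phrasing via the preceding theorem is even closer to the paper's intended one-line deduction.
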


\begin{rk}
We stress that isomorphisms of Lie algebras does not imply isomorphism of the
corresponding evolution algebras
(see Example \ref{ex_2}).
\end{rk}

\section{Local and $2$-local derivations for evolution algebras}

The results of section 3 will allow us to describe local and $2$-local
derivations of nilpotent evolution algebra. In this section, we want to fully describe local and 
$2$-local derivations of
nilpotent evolution algebras
with maximal index of nilpotency.

Recall that a linear mapping $\Delta$ on $\bf E$ is called {\it local derivation}
if for every ${\bf u}\in\bf E$ there is a derivation $d_{\bf u}$ such that
$\Delta({\bf u})=d_{\bf u}(\bf u)$. A mapping (not necessary linear) $D:{\bf E}\to{\bf E}$
is called {\it $2$-local derivation} of algebra $\bf E$ if for every ${\bf u},{\bf v}\in{\bf E}$
there exists a derivation $d_{{\bf u},{\bf v}}$ of $\bf E$ such that $D({\bf u})=d_{{\bf u},{\bf v}}({\bf u})$
and $D({\bf v})=d_{{\bf u},{\bf v}}({\bf v})$.

Therefore, it is natural to find all local derivations of $\bf E$.

\begin{thm}\label{thm_localder} Let $\bf E$ be an $n$-dimensional nilpotent evolution algebra with maximal index of nilpotency.
Then the following statements hold:
\begin{enumerate}
\item[$(i)$] If $n=2$, then the space of all local derivations has the following form:
\begin{equation}\label{locder1}
\left\{
\left(
\begin{array}{ll}
\alpha & \beta\\
0 & 2\alpha
\end{array}\right),\ \
\left(
\begin{array}{ll}
\alpha & \beta\\
0 & 0
\end{array}\right): \alpha,\beta\in\mathbb K
\right\}
\end{equation}
\\
\item[$(ii)$]
If $n>2$ then every local derivation of $\bf E$ is a derivation.
\end{enumerate}
\end{thm}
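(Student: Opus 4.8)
The plan is to confront a given local derivation $\Delta$, which is a linear map, with the explicit list of all derivations furnished by Theorem~\ref{thm_der} for the multiplication table \eqref{evolalg}. Since $\Delta$ is determined by the vectors $\Delta(\mathbf e_k)$, $1\le k\le n$, and since for each $k$ there is a genuine derivation $d_k$ with $\Delta(\mathbf e_k)=d_k(\mathbf e_k)$, each $\Delta(\mathbf e_k)$ is forced into a small subspace of $\mathbf E$; one then tests $\Delta$ on a handful of vectors of the form $\mathbf e_i+\mathbf e_j$ to rigidify $\Delta$ into a single derivation. For $n=2$ we have $I_A=\emptyset$ and $\Der(\mathbf E)$ is the two-parameter family of Theorem~\ref{thm_der}(ii): testing $\Delta$ on $\mathbf e_1$ imposes no condition on $\Delta(\mathbf e_1)$, testing on $\mathbf e_2$ forces $\Delta(\mathbf e_2)\in\mathbb K\mathbf e_2$, and testing on a general $\mathbf u=x\mathbf e_1+y\mathbf e_2$ with $x\neq 0$ adds nothing, because the derivations already carry such a $\mathbf u$ onto every vector of $\mathbf E$. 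Assembling these observations gives \eqref{locder1}, which is assertion (i).

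Assume now $n>2$, and first treat the case $I_A\neq\emptyset$. By Theorem~\ref{thm_der}(i) every derivation acts by $\mathbf e_1\mapsto\beta\mathbf e_n$ and $\mathbf e_i\mapsto\mathbf 0$ for $i\ge 2$. Consequently $\Delta(\mathbf e_1)=d_1(\mathbf e_1)\in\mathbb K\mathbf e_n$ and $\Delta(\mathbf e_i)=d_i(\mathbf e_i)=\mathbf 0$ for every $i\ge 2$, so $\Delta$ already agrees on the natural basis with the derivation $\mathbf e_1\mapsto\beta\mathbf e_n$, $\mathbf e_i\mapsto\mathbf 0$; hence $\Delta\in\Der(\mathbf E)$ and there is nothing more to prove in this case.

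Still with $n>2$, suppose $I_A=\emptyset$, and denote by $d_{\alpha,\beta}$ the derivation of Theorem~\ref{thm_der}(ii) with parameters $\alpha,\beta$, so that (writing $c_i=(2^{i-1}-2^{n-1})a_{i-1,n}$) one has $d_{\alpha,\beta}(\mathbf e_1)=\alpha\mathbf e_1+\beta\mathbf e_n$, $d_{\alpha,\beta}(\mathbf e_i)=2^{i-1}\alpha\mathbf e_i+c_i\alpha\mathbf e_n$ for $2\le i\le n-1$, and $d_{\alpha,\beta}(\mathbf e_n)=2^{n-1}\alpha\mathbf e_n$. From $\Delta(\mathbf e_k)=d_k(\mathbf e_k)$ we read off $\Delta(\mathbf e_1)=a\mathbf e_1+b\mathbf e_n$, $\Delta(\mathbf e_i)=\mu_i(2^{i-1}\mathbf e_i+c_i\mathbf e_n)$ for $2\le i\le n-1$, and $\Delta(\mathbf e_n)=\nu\mathbf e_n$, for some scalars $a,b,\mu_i,\nu$. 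Now apply the defining property to $\mathbf u=\mathbf e_1+\mathbf e_i$ with $2\le i\le n-1$: comparing $\mathbf e_1$-components forces the derivation parameter to equal $a$, and then comparing $\mathbf e_i$-components forces $\mu_i=a$, that is $\Delta(\mathbf e_i)=d_{a,b}(\mathbf e_i)$. Finally apply it to $\mathbf u=\mathbf e_2+\mathbf e_n$ --- this is the single place where the hypothesis $n>2$ is used, since it guarantees that $2$ lies in the range $\{2,\dots,n-1\}$ --- and comparing the $\mathbf e_2$- and $\mathbf e_n$-components gives parameter $a$ once more together with $\nu=2^{n-1}a$, i.e.\ $\Delta(\mathbf e_n)=d_{a,b}(\mathbf e_n)$. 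Hence $\Delta$ and $d_{a,b}$ agree on the whole natural basis, so $\Delta=d_{a,b}\in\Der(\mathbf E)$. Since every derivation is trivially a local derivation, this proves (ii).

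The crux of the argument is the choice of test elements. The conditions $\Delta(\mathbf e_k)=d_k(\mathbf e_k)$ alone confine $\Delta(\mathbf e_1)$ and the $\Delta(\mathbf e_i)$, $2\le i\le n-1$, to lines but say nothing about $\Delta(\mathbf e_n)$, and the vectors $\mathbf e_1+\mathbf e_i$ are still silent about $\Delta(\mathbf e_n)$ because the coefficient $\beta$ of a derivation along $\mathbf e_n$ is a free parameter. It is only a test vector mixing some $\mathbf e_i$ with $2\le i\le n-1$ and $\mathbf e_n$ that pins $\Delta(\mathbf e_n)$ down, and the availability of such an index $i$ is exactly what fails when $n=2$; this is the structural reason the case $n=2$ is genuinely exceptional. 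Everything else is routine bookkeeping with the explicit matrices of Theorem~\ref{thm_der}.
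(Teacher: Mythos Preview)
Your argument for part~(ii) is correct and runs along the same lines as the paper's: both proofs read off the shape of $\Delta(\mathbf e_k)$ from Theorem~\ref{thm_der} and then rigidify the free parameters by testing on a few mixed vectors, culminating in $\mathbf e_2+\mathbf e_n$ to pin down $\Delta(\mathbf e_n)$. The only difference is that the paper first passes to the simpler algebra $\mathbf E'$ of Lemma~\ref{lem1} (so that all $c_i$ vanish) and tests on the single vector $\sum_{k=1}^{n-1}\mathbf e_k$, while you stay with the general table \eqref{evolalg} and test on each $\mathbf e_1+\mathbf e_i$ separately. Your version avoids the isomorphism-reduction step; the paper's version makes the bookkeeping with the $\mathbf e_n$-column trivial. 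Either works.

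Part~(i), however, has a genuine gap. The three observations you record --- $\Delta(\mathbf e_1)$ unrestricted, $\Delta(\mathbf e_2)\in\mathbb K\mathbf e_2$, and no further constraint from vectors with $u_1\neq 0$ --- characterize the local derivations as \emph{all} upper-triangular matrices
\[
\begin{pmatrix}a&b\\0&c\end{pmatrix},\qquad a,b,c\in\mathbb K,
\]
not merely those with $c\in\{0,2a\}$ as \eqref{locder1} asserts. For example the identity map is a local derivation (for $u_1\neq 0$ use the derivation with $\alpha=1$, $\beta=-u_2/u_1$; for $u_1=0$ use $\alpha=\tfrac12$), yet it does not lie in \eqref{locder1}. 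So the sentence ``Assembling these observations gives \eqref{locder1}'' does not follow from what precedes it. In fact the paper's own proof has the same defect: from $u_1=0$ the first equation reads $0=0$ and places no restriction on $\alpha_{\mathbf u}$, so the conclusion $\Delta_{22}\in\{0,2\Delta_{11}\}$ there is unjustified. You should either supply an additional argument forcing $c\in\{0,2a\}$ or, more likely, note that the stated form \eqref{locder1} is too small and that the correct answer is the full set of upper-triangular $2\times 2$ matrices.
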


\begin{proof}
$(i)$ Let $n=2$. Then due to Lemma \ref{lem1} we may assume that an evolution algebra $\bf E$ is
given by ${\bf e}_1^2={\bf e}_2$ and ${\bf e}_2^2=\bf0$.
Take an arbitrary linear map $\Delta$ on $\bf E$, i.e.,
$$
\Delta({\bf u})=(\Delta_{11}u_1+\Delta_{21}u_2){\bf e}_1+(\Delta_{12}u_1+\Delta_{22}u_2){\bf e}_2,\ \
\forall {\bf u}=u_1{\bf e}_1+u_2{\bf e}_2.
$$
If $\Delta$ is local derivation then for any $\bf u$ there exist $\alpha_{\bf u}$ and $\beta_{\bf u}$ such that
$$
\begin{array}{ll}
\Delta_{11}u_1+\Delta_{21}u_2=\alpha_{\bf u}u_1\\
\Delta_{12}u_1+\Delta_{22}u_2=\beta_{\bf u}u_1+2\alpha_{\bf u}u_2
\end{array}
$$
From the first equation we get $\Delta_{21}=0$. If we take $\bf u$ such that $u_1=0$ then
from the second equation we immediately find $\Delta_{22}\in\{0,2\Delta_{11}\}$. It
is easy to that $\Delta$ is a derivation of $\bf E$ if $\Delta_{22}=2\Delta_{11}$.

Suppose $\Delta_{22}=0$ and $\Delta_{11}\neq0$.
Then for every $\bf u$ we can find derivation $d_{\bf u}$ satisfying $\Delta({\bf u})=d_{\bf u}({\bf u})$
as follows
$$
d_{\bf u}=\left\{
\begin{array}{lr}
\left(
\begin{array}{ll}
\Delta_{11} & \Delta_{12}-\frac{2\Delta_{11}u_2}{u_1}\\
0 & 2\Delta_{11}
\end{array}\right), & \mbox{if }u_1\neq0,\\[5mm]
\left(
\begin{array}{ll}
0 & 0\\
0 & 0
\end{array}\right), & \mbox{if }u_1=0.
\end{array}\right.
$$
This means that a linear mapping defined by
\begin{equation}\label{locder2}
\Delta=\left(
\begin{array}{ll}
\alpha & \beta\\
0 & 2\alpha
\end{array}\right),\ \ \alpha,\beta\in\mathbb K
\end{equation}
is a local derivation of $\bf E$.

Finally, since every derivation of algebra $\bf E$ is local derivation and due to
\eqref{locder2} one gets \eqref{locder1}

$(ii)$
Let $\Delta$ be a non zero local derivation given by matrix $(\Delta_{ij})_{i,j\geq1}^n$.
Assume that $I_A\neq\emptyset$. Then due to $\Delta({\bf e}_i)=d_{{\bf e}_i}({\bf e}_i)$ for any $i\leq n$
we immediately get $\Delta_{1n}=\beta_{{\bf e}_1}$ and $\Delta_{ij}=0$ otherwise. It yields that
$\Delta\in\Der({\bf E})$.

Suppose that $I_A=\emptyset$. Let us establish that $\Delta\in\Der({\bf E})$ for any local
derivation $\Delta$. Due to Lemmas \ref{lem1} and \ref{lem2}
in order to show every local derivation can be derivation
it is enough to check only for evolution algebra ${\bf E}'$ (see Lemma \ref{lem1}).

Since $\Delta({\bf e}_i)=d_{{\bf e}_i}({\bf e}_i)$ for any $i\leq n$
we can easily find
\begin{equation}\label{Delta_ii}
\begin{array}{ll}
\Delta_{ii}=d_{ii}^{({\bf e}_i)}, & i\leq n\\
\Delta_{1n}=d_{1n}^{({\bf e}_1)}\\
\Delta_{ij}=0, & \mbox{otherwise}
\end{array}
\end{equation}
Taking ${\bf u}=\sum_{k=1}^{n-1}{\bf e}_k$ we obtain
\begin{equation}\label{Delta}
\Delta_{ii}=2^{i-1}\Delta_{11},\ \  i<n
\end{equation}
Consider ${\bf v}={\bf e}_2+{\bf e}_n$. Then there exists a derivation
$d_{{\bf v}}$ such that $\Delta({\bf v})=d_{{\bf v}}({\bf v})$. Due to the
assumption $(ii)$ of Theorem \ref{thm_der}
we have
$$
\Delta_{22}{\bf e}_2+\Delta_{nn}{\bf e}_n=2d_{11}^{({\bf v})}{{\bf e}_2}+2^{n-1}d_{11}^{({\bf v})}{\bf e}_n.
$$
This implies that
$$
\begin{array}{ll}
2d_{11}^{({\bf v})}=\Delta_{22}\\
2^{n-1}d_{11}^{({\bf v})}=\Delta_{nn}
\end{array}
$$
Plugging last ones into \eqref{Delta} we obtain $\Delta_{ii}=2^{i-1}\Delta_{11}$. Then using \eqref{Delta_ii}
one finds
$$
\begin{array}{ll}
\Delta_{ii}=d_{ii}^{({\bf e}_1)}, & i\leq n\\
\Delta_{1n}=d_{1n}^{({\bf e}_1)}\\
\Delta_{ij}=0, & \mbox{otherwise}
\end{array}
$$
Hence, due to Theorem \ref{thm_der}
we conclude that $\Delta$ is a derivation.

This completes the proof.
\end{proof}

\begin{thm}\label{thm_2localder}
Every $2$-local derivation of nilpotent evolution algebras
with maximal index of nilpotency is a derivation.
\end{thm}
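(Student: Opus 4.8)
The plan is to reduce everything to one elementary observation about $\Der({\bf E})$: the linear evaluation map $\mathrm{ev}\colon\Der({\bf E})\to{\bf E}$, $d\mapsto d({\bf e}_1)$, is \emph{injective}. This is read straight off Theorem \ref{thm_der}: the first row of the matrix of an arbitrary $d\in\Der({\bf E})$ gives $d({\bf e}_1)=\beta{\bf e}_n$ when $I_A\neq\emptyset$, and $d({\bf e}_1)=\alpha{\bf e}_1+\beta{\bf e}_n$ when $I_A=\emptyset$; in either case $d({\bf e}_1)={\bf 0}$ forces $\alpha=\beta=0$, and then \eqref{ttt1}--\eqref{ttt4} (resp.\ the description in part $(i)$) make every remaining entry of $d$ vanish, so $d=0$. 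I would record this as the first step, treating the two cases $I_A=\emptyset$ and $I_A\neq\emptyset$ together.

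Given this, let $D$ be a $2$-local derivation of ${\bf E}$. Applying the definition to the pair $({\bf e}_1,{\bf e}_1)$ yields a derivation $d_0$ with $D({\bf e}_1)=d_0({\bf e}_1)$. Now fix an arbitrary ${\bf u}\in{\bf E}$ and apply the definition to the pair $({\bf u},{\bf e}_1)$: there is a derivation $d$ with $D({\bf u})=d({\bf u})$ and $D({\bf e}_1)=d({\bf e}_1)$. Comparing the second identity with $D({\bf e}_1)=d_0({\bf e}_1)$ gives $d({\bf e}_1)=d_0({\bf e}_1)$, whence injectivity of $\mathrm{ev}$ forces $d=d_0$, so $D({\bf u})=d_0({\bf u})$. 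Since ${\bf u}$ was arbitrary, $D=d_0\in\Der({\bf E})$. Note that linearity of $D$ is not assumed but falls out as a by-product.

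Two remarks on the scope of the argument. First, unlike the local-derivation case there is no need to single out $n=2$: the injectivity of $\mathrm{ev}$ holds verbatim there, since for ${\bf e}_1^2={\bf e}_2$ one has $d({\bf e}_1)=\alpha{\bf e}_1+\beta{\bf e}_2$. Second, the whole proof runs on ${\bf E}$ itself, so the normalization of Lemma \ref{lem1} is not required; if one prefers to pass to the model algebra ${\bf E}'$, Lemmas \ref{lem1} and \ref{lem2} make the reduction harmless (an algebra isomorphism carries derivations and $2$-local derivations to the same) and the computation is unchanged. The only genuine content is the first step — extracting the injectivity of $\mathrm{ev}$ from Theorem \ref{thm_der} — and even that is a short consistency check rather than a real obstacle; the remainder is a one-line deduction.
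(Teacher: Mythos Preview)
Your argument is correct and substantially cleaner than the paper's. The paper proceeds by direct computation: writing $D({\bf u})$ and $D({\bf v})$ in coordinates via Theorem~\ref{thm_der}, it tracks the dependence of the parameters $\alpha_{{\bf u},{\bf v}}$ and $\beta_{{\bf u},{\bf v}}$ on ${\bf u}$ and ${\bf v}$, splits into the cases $I_A=\emptyset$ and $I_A\neq\emptyset$, and further distinguishes whether the first coordinate $u_1$ vanishes (the set $\Gamma=\{{\bf u}:u_1\neq 0\}$), eventually pinning down both parameters as constants. Your proof bypasses all of this by extracting a single structural fact from Theorem~\ref{thm_der}: ${\bf e}_1$ is a separating vector for $\Der({\bf E})$, i.e., $d\mapsto d({\bf e}_1)$ is injective. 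Once that is noted, the $2$-local property at the pair $({\bf u},{\bf e}_1)$ forces the auxiliary derivation to agree with a fixed $d_0$ on ${\bf e}_1$, hence to equal $d_0$, and $D=d_0$ follows in one stroke. This handles both cases of $I_A$ and all $n\geq 2$ uniformly, and it is the standard ``separating vector'' mechanism behind many $2$-local results; the paper's computation, by contrast, recovers the same conclusion piece by piece but makes the role of ${\bf e}_1$ less visible.
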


\begin{proof} Let $D$ be a non zero $2$-local derivation of $\bf E$. Denote $\Gamma=\{{\bf u}\in{\bf E}: u_1\neq0\}$.

{\bf Case} $I_A=\emptyset$. By definition there exist functionals
$\alpha_{{\bf u},{\bf v}}$ and $\beta_{{\bf u},{\bf v}}$
such that
\begin{equation}\label{D}
\begin{array}{ll}
D({\bf u})=\sum\limits_{k=1}^{n-1}2^{k-1}\alpha_{{\bf u},{\bf v}}u_k{\bf e}_k+
\left(\beta_{{\bf u},{\bf v}}u_1+2^{n-1}\alpha_{{\bf u},{\bf v}}u_{n}+\alpha_{{\bf u},{\bf v}}\sum\limits_{k=1}^{n-2}d_ku_{k+1}\right){\bf e}_n\\[3mm]
D({\bf v})=\sum\limits_{k=1}^{n-1}2^{k-1}\alpha_{{\bf u},{\bf v}}v_k{\bf e}_k+
\left(\beta_{{\bf u},{\bf v}}v_1+2^{n-1}\alpha_{{\bf u},{\bf v}}v_{n}+\alpha_{{\bf u},{\bf v}}\sum\limits_{k=1}^{n-2}d_kv_{k+1}\right){\bf e}_n
\end{array}
\end{equation}
where ${\bf u}=\sum_{k=1}^nu_k{\bf e}_k$ and ${\bf v}=\sum_{k=1}^nv_k{\bf e}_k$.

Take an arbitrary
non-zero ${\bf u}\in\bf E$. Then, for any ${\bf v},{\bf v}'\in\bf E$ from the last ones we find
$$
\begin{array}{ll}
\sum\limits_{k=1}^{n-1}2^{k-1}\alpha_{{\bf u},{\bf v}}u_k{\bf e}_k+
\left(\beta_{{\bf u},{\bf v}}u_1+2^{n-1}\alpha_{{\bf u},{\bf v}}u_{n}+\alpha_{{\bf u},{\bf v}}\sum\limits_{k=1}^{n-2}d_ku_{k+1}\right){\bf e}_n\\[3mm]
=\sum\limits_{k=1}^{n-1}2^{k-1}\alpha_{{\bf u},{\bf v}'}u_k{\bf e}_k+
\left(\beta_{{\bf u},{\bf v}'}u_1+2^{n-1}\alpha_{{\bf u},{\bf v}'}u_{n}+\alpha_{{\bf u},{\bf v}'}\sum\limits_{k=1}^{n-2}d_ku_{k+1}\right){\bf e}_n
\end{array}
$$
which is equivalent to
$$
\begin{array}{lr}
\alpha_{{\bf u},{\bf v}}u_k=\alpha_{{\bf u},{\bf v}'}u_k, & k=\overline{1,n-1}\\[3mm]
\beta_{{\bf u},{\bf v}}u_1+2^{n-1}\alpha_{{\bf u},{\bf v}}u_{n}+\alpha_{{\bf u},{\bf v}}\sum\limits_{k=1}^{n-2}d_ku_{k+1}\\[3mm]
=\beta_{{\bf u},{\bf v}'}u_1+2^{n-1}\alpha_{{\bf u},{\bf v}'}u_{n}+\alpha_{{\bf u},{\bf v}'}\sum\limits_{k=1}^{n-2}d_ku_{k+1}
\end{array}
$$
Since, ${\bf u}\neq\bf0$ we get
$\alpha_{{\bf u},{\bf v}}=\alpha_{{\bf u},{\bf v}'}$ for any ${\bf v},{\bf v}'\in\bf E$. This means
that
\begin{equation}\label{albet}
\alpha_{{\bf u},{\bf v}}=:\alpha_{{\bf u}}
\end{equation}
Moreover, if ${\bf u}\in\Gamma$ then one finds
\begin{equation}\label{albet2}
\beta_{{\bf u},{\bf v}}=:\beta_{{\bf u}}
\end{equation}
Taking \eqref{albet},\eqref{albet2} into \eqref{D} we conclude that mapping $D$
can be defined as follows
\begin{equation}\label{Ddef}
D({\bf u})=\left\{
\begin{array}{ll}
\sum\limits_{k=1}^n2^{k-1}\alpha_{{\bf u}}u_k{\bf e}_k+\left(\beta_{{\bf u}}u_1+\alpha_{\bf u}\sum\limits_{k=1}^{n-2}d_ku_{k+1}\right){\bf e}_n, & \mbox{if }{\bf u}\in\Gamma\\[4mm]
\sum\limits_{k=2}^{n-1}2^{k-1}\alpha_{{\bf u}}u_k{\bf e}_k+\left(2^{u-1}u_n+\sum\limits_{k=1}^{n-2}d_ku_{k+1}\right)\alpha_{{\bf u}}{\bf e}_n, & \mbox{if }{\bf u}\not\in\Gamma
\end{array}\right.
\end{equation}
Then for any ${\bf u}',{\bf v}'\in\bf E$ we can find
derivation $d$ given by
$$
d_{ij}=\left\{
\begin{array}{ll}
2^{i-1}\alpha, & \mbox{if }i=j\\
\beta, & \mbox{if }i=1, j=n\\
\alpha d_{i-1}, & \mbox{if }1<i<n, j=n\\
0, & \mbox{otherwise}
\end{array}
\right.
$$
such that
$D({\bf u}')=d({\bf u}')$,
$D({\bf v}')=d({\bf v}')$. Then from \eqref{Ddef}
one gets
\begin{equation}\label{alp=alp}
\alpha_{{\bf u}'}=\alpha_{{\bf v}'}=\alpha\ \ \ \mbox{for any }{\bf u}',{\bf v}'\in\bf E.
\end{equation}
This means that functional $\alpha_{\bf u}$ is a constant.

To complete the proof we show $\beta_{\bf u}=const$ for any $\bf u\in\bf E$.
Let us consider non-zero points ${\bf u},{\bf v}\in\Gamma$. Using the first equality of \eqref{Ddef} and noting
\eqref{alp=alp} by definition of $2$-local derivation we get $\beta_{\bf u}=\beta_{\bf v}$.
This means that $\beta_{\bf u}$ does not depend $\bf u$, i.e., $\beta_{\bf u}=\beta$ for any $\bf u\in\Gamma$.
Putting this fact and \eqref{alp=alp} into \eqref{Ddef} yields that $D$ has the following form

$$
D({\bf u})=\sum_{k=1}^{n-1}2^{k-1}\alpha u_k{\bf e}_k+\left(\beta u_1+2^{n-1}\alpha u_n+\alpha\sum\limits_{k=1}^{n-2}d_ku_{k+1}\right){\bf e}_n.
$$
Due to Theorem \ref{thm_der} $(ii)$ $D$ is a derivation.

{\bf Case} $I_A\neq\emptyset$.
By definition there exist functionals $\alpha_{{\bf u},{\bf v}}$ and $\beta_{{\bf u},{\bf v}}$ such that
\begin{equation}\label{D2}
\begin{array}{ll}
D({\bf u})=\beta_{{\bf u},{\bf v}}u_1{\bf e}_n\\[2mm]
D({\bf v})=\beta_{{\bf u},{\bf v}}v_1{\bf e}_n
\end{array}
\end{equation}

Take arbitrary ${\bf u}\in\Gamma$.
Then from the first equation of \eqref{D2} we obtain
$\beta_{{\bf u},{\bf v}}=\beta_{{\bf u},{\bf v}'}$ for any ${\bf v},{\bf v}'\in{\bf E}$. This means that
$\beta_{{\bf u},{\bf v}}$ does not depend on $\bf v$, i.e.,
$\beta_{{\bf u},{\bf v}}=\beta_{{\bf u}},\ \forall{\bf u}\in\Gamma$. On the other
hand, from the second equation of \eqref{D2} we get $\beta_{{\bf u},{\bf v}}=\beta_{\bf v}$ for any
${\bf v}\in\Gamma$. These facts yield that $\beta_{\bf u}=:\beta$ for any ${\bf u},{\bf v}\in\bf E$. Consequently,
we have
$$
D({\bf u})=\beta u_1{\bf e}_1.
$$
Due to Theorem \ref{thm_der} $(i)$
we obtain $D\in\Der({\bf E})$.

\end{proof}

\section{Automorphisms and local automorphisms}

Recall that by an {\it automorphism} of an evolution algebra
$\bf E$ we mean an isomorphism of $\bf E$ into itself. The set of all
automorphisms is denoted by $Aut({\bf E})$. It is known that
$Aut({\bf E})$ is a group. In this section we are going
to describe $Aut({\bf E})$ of nilpotent evolution algebras
with maximal index of nilpotency.

If $I_A\neq\emptyset$, then by $\eta$ we denote the largest common divisor
of all numbers $2^{j-1}-2^{i}$ where $(i,j)\in I_A$, i.e.,
\begin{equation}\label{eta}
\eta=LCD_{(i,j)\in I_A}(2^{j-1}-2^i)
\end{equation}

\begin{thm}\label{thm_auto}
Let $\bf E$ be an $n$-dimensional nilpotent evolution algebra with maximal index of nilpotency
and $A=(a_{ij})_{i,j=1}^n$ be its structural matrix in a natural basis $\{{\bf e}_i\}_{i=1}^n$.
Then the following statements hold:
\begin{enumerate}
\item[$(i)$] if $I_A\neq\emptyset$ then
$$
Aut({\bf{E}})=\left\{
\left(
\begin{array}{lllll}
\alpha & 0 & \ldots & 0 & \beta\\
0 & \alpha^2 & \ldots & 0 & \varphi_{2n}\\
\vdots & \vdots & \ddots & \vdots & \vdots\\
0 & 0 & \ldots & \alpha^{2^{n-2}} & \varphi_{n-1,n}\\
0 & 0 & \ldots & 0 & \alpha^{2^{n-1}}
\end{array}
\right):\ \alpha,\beta\in\mathbb K,\ \alpha^\eta=1
\right\}
$$
where $\eta$ is defined as \eqref{eta}, and $\varphi_{in}$ is given by the following recurrence formula
$$
\begin{array}{ll}
\varphi_{n-1,n}=a_{n-2,n}(\alpha^{2^{n-2}}-\alpha^{2^{n-1}}),\\[2mm]
\varphi_{n-i,n}=a_{n-i-1,n}(\alpha^{2^{n-i-1}}-\alpha^{2^{n-1}})-\sum\limits_{k=1}^{i-1}a_{n-i-1,n-k}\varphi_{n-k,n}, & 1<i<n-1.
\end{array}
$$
\item[$(ii)$] if $I_A=\emptyset$ then
$$
Aut({\bf{E}})=\left\{
\left(
\begin{array}{lllll}
\alpha & 0 & \ldots & 0 & \beta\\
0 & \alpha^2 & \ldots & 0 & a_{1,n}(\alpha^2-\alpha^{2^{n-1}})\\
\vdots & \vdots & \ddots & \vdots & \vdots\\
0 & 0 & \ldots & \alpha^{2^{n-2}} & a_{n-2,n}(\alpha^{2^{n-2}}-\alpha^{2^{n-1}})\\
0 & 0 & \ldots & 0 & \alpha^{2^{n-1}}
\end{array}
\right):\ \alpha,\beta\in\mathbb K,\ \alpha\neq0
\right\}
$$
\end{enumerate}
\end{thm}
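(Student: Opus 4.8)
The plan is to compute $\mathrm{Aut}(\mathbf{E})$ directly in the natural basis by imposing the condition that a linear map $\varphi$ with matrix $(\varphi_{ij})$ satisfy $\varphi(\mathbf{e}_i^2)=\varphi(\mathbf{e}_i)^2$ for all $i$, and that $\varphi$ be invertible. Since $\mathbf{E}$ is given by \eqref{evolalg}, we may first use Lemma \ref{lem1} to normalize, but in fact it is cleaner to argue on the general form \eqref{evolalg}: write $\varphi(\mathbf{e}_i)=\sum_j\varphi_{ij}\mathbf{e}_j$. The first step is to exploit the ``diagonal-dominance'' forced by nilpotency exactly as in the proof of Theorem \ref{thm_der}: from $\varphi(\mathbf{e}_i)\varphi(\mathbf{e}_j)=\varphi(\mathbf{e}_i\mathbf{e}_j)=\mathbf{0}$ for $i\neq j$ together with linear independence of the $\mathbf{e}_k^2$ one deduces $\varphi_{ij}=0$ for all $i\neq j$ with $j<n$, and that $\varphi$ is ``upper triangular'' with respect to the flag $\mathbf{E}\supset\mathbf{E}^2\supset\cdots$; more precisely one gets that the only possibly-nonzero off-diagonal entries are the $\varphi_{in}$ in the last column. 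Invertibility then forces all $\varphi_{ii}\neq0$.

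Next I would extract the multiplicative relations among the diagonal entries. Applying $\varphi$ to $\mathbf{e}_{n-1}^2=a_{n-1,n}\mathbf{e}_n$ gives $\varphi_{n-1,n-1}a_{n-1,n}\mathbf{e}_n=(\varphi_{n-1,n}\mathbf{e}_n)$-free part equals $\varphi(\mathbf{e}_{n-1})^2=\varphi_{n-1,n-1}^2 a_{n-1,n}\mathbf{e}_n$ (the cross terms vanish since $\mathbf{e}_n^2=\mathbf{0}$), hence $\varphi_{nn}=\varphi_{n-1,n-1}^2$. Inductively, comparing the $\mathbf{e}_{i+1}$-coefficients in $\varphi(\mathbf{e}_i^2)=\varphi(\mathbf{e}_i)^2$ yields $\varphi_{i+1,i+1}=\varphi_{ii}^2$, so writing $\alpha:=\varphi_{11}$ we get $\varphi_{ii}=\alpha^{2^{i-1}}$ for $1\le i\le n$. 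Then comparing the $\mathbf{e}_j$-coefficients for $i+2\le j\le n-1$ gives $a_{ij}\varphi_{jj}=\varphi_{ii}^2 a_{ij}$, i.e. $a_{ij}(\alpha^{2^{j-1}}-\alpha^{2^i})=0$; when $I_A\neq\emptyset$ this forces $\alpha^{2^{j-1}-2^i}=1$ for every $(i,j)\in I_A$, equivalently $\alpha^\eta=1$ with $\eta$ as in \eqref{eta}, whereas when $I_A=\emptyset$ this imposes nothing beyond $\alpha\neq0$. Finally, matching the $\mathbf{e}_n$-coefficient in $\varphi(\mathbf{e}_i^2)=\varphi(\mathbf{e}_i)^2$ for $i<n-1$ produces, after substituting $\varphi_{ii}=\alpha^{2^{i-1}}$ and the already-known $a_{ij}=0$ (case $I_A=\emptyset$) or the vanishing relations (general case), a triangular linear system for the $\varphi_{in}$ which solves to the stated recurrence; in the case $I_A=\emptyset$ all the interior $a_{ij}$ vanish and the recurrence collapses to the closed form $\varphi_{in}=a_{i-1,n}(\alpha^{2^{i-1}}-\alpha^{2^{n-1}})$. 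Throughout, $\beta:=\varphi_{1n}$ is unconstrained because $\mathbf{e}_1$ never appears on the right-hand side of any $\mathbf{e}_i^2$, so the $\mathbf{e}_n$-row equation for $i=1$ only involves $\varphi_{2n},\dots,\varphi_{nn}$ and leaves $\varphi_{1n}$ free.

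For the converse one checks that any matrix of the displayed shape is indeed an automorphism: it is invertible since its diagonal $\alpha,\alpha^2,\dots,\alpha^{2^{n-1}}$ consists of nonzero entries (using $\alpha\neq0$, which is implied by $\alpha^\eta=1$ in case $(i)$), and it is multiplicative because the recurrence for $\varphi_{in}$ was engineered precisely to make $\varphi(\mathbf{e}_i^2)=\varphi(\mathbf{e}_i)^2$ hold — this is a direct back-substitution, reversing the computation above. One should also remark that the set so described is nonempty and forms a group, consistent with $\mathrm{Aut}(\mathbf{E})$ being a group; in case $(i)$ it is finite-by-$(\mathbb{K},+)$ in the $\beta$-direction, the $\alpha$ ranging over the $\eta$-th roots of unity in $\mathbb{K}$.

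I expect the main obstacle to be the bookkeeping in the last column, namely verifying that the $\mathbf{e}_n$-component equations $\sum_{j=i+1}^{n} a_{ij}\varphi(\mathbf{e}_j)\big|_{\mathbf{e}_n} = \varphi_{ii}^2 a_{in}$ rearrange exactly into the asserted recurrence $\varphi_{n-i,n}=a_{n-i-1,n}(\alpha^{2^{n-i-1}}-\alpha^{2^{n-1}})-\sum_{k=1}^{i-1}a_{n-i-1,n-k}\varphi_{n-k,n}$ — one must be careful that the terms $a_{i,j}\varphi_{jn}$ with $i+2\le j\le n-1$ either survive (general case) or are killed by $a_{ij}=0$ (case $I_A=\emptyset$), and that the induction is organized from $i=n-1$ downward so that each $\varphi_{in}$ is expressed through strictly-later ones. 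The structural constraints on $\alpha$ (the condition $\alpha^\eta=1$) drop out cleanly from the interior equations and are not the difficulty; the only subtlety there is to note that $\eta$ being the largest common divisor means $\alpha^{2^{j-1}-2^i}=1$ for all $(i,j)\in I_A$ is equivalent to the single condition $\alpha^\eta=1$.
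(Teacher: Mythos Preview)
Your overall strategy matches the paper's proof, but there is a genuine gap in your first step. You claim that from $\varphi(\mathbf{e}_i)\varphi(\mathbf{e}_j)=\mathbf{0}$ for $i\neq j$ together with linear independence of the $\mathbf{e}_k^2$ one deduces $\varphi_{ij}=0$ for all $i\neq j$ with $j<n$, ``exactly as in the proof of Theorem~\ref{thm_der}.'' But the analogy fails: for a derivation the condition $d(\mathbf{e}_i\mathbf{e}_j)=0$ reads $d_{ji}\mathbf{e}_i^2+d_{ij}\mathbf{e}_j^2=\mathbf{0}$, which immediately kills $d_{ij}$ and $d_{ji}$. For an automorphism the condition $\varphi(\mathbf{e}_i)\varphi(\mathbf{e}_j)=\mathbf{0}$ reads $\sum_{k=1}^{n-1}\varphi_{ik}\varphi_{jk}\,\mathbf{e}_k^2=\mathbf{0}$, which only yields $\varphi_{ik}\varphi_{jk}=0$ for each $k<n$ --- i.e.\ each of the first $n-1$ columns has at most one nonzero entry, in some as-yet-unknown row. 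Nothing here tells you that this row is the diagonal one, so you cannot yet conclude $\varphi_{ij}=0$ for $i\neq j$, $j<n$, nor that $\varphi_{ii}\neq 0$.

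The paper closes this gap with an induction you have skipped. First $\varphi(\mathbf{e}_n)^2=\mathbf{0}$ gives $\varphi_{nk}=0$ for $k<n$; then $\varphi(\mathbf{e}_{n-1}^2)=\varphi(\mathbf{e}_{n-1})^2$ forces $\varphi_{n-1,k}=0$ for $k\le n-2$ and $\varphi_{nn}=\varphi_{n-1,n-1}^2$. One then proves by downward induction on $i$ the claim $\varphi_{il}=0$ for $l<i$ and $\varphi_{i+1,i+1}=\varphi_{ii}^2$: assuming it for rows $>i_0$, the relation $\varphi(\mathbf{e}_{i_0}^2)=\varphi(\mathbf{e}_{i_0})^2$ becomes a triangular system $\sum_{j=1}^{l-1}a_{jl}\varphi_{i_0 j}^2=0$ for $2\le l\le i_0$, which (using $a_{l-1,l}\neq 0$) peels off $\varphi_{i_0,1}=\varphi_{i_0,2}=\cdots=\varphi_{i_0,i_0-1}=0$. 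Only after this does $\varphi_{ik}\varphi_{jk}=0$ combine with $\varphi_{kk}\neq 0$ to kill the strictly upper entries $\varphi_{ij}$ with $i<j<n$. Once you insert this argument, the rest of your outline --- the diagonal recursion $\varphi_{ii}=\alpha^{2^{i-1}}$, the constraint $\alpha^{2^{j-1}-2^i}=1$ for $(i,j)\in I_A$ (hence $\alpha^\eta=1$), and the downward recurrence for $\varphi_{in}$ --- goes through exactly as you describe and exactly as in the paper.
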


\begin{proof}
Let $\varphi$ be a linear mapping on $\bf{E}$. Now we represent
$\varphi$ on the basis elements as follows:
$$
\varphi({\bf e}_i)=\sum_{j=1}^n\varphi_{ij}{\bf e}_j,\ \ \ 1\leq i\leq n.
$$
We want to describe matrix $(\varphi_{ij})_{i,j=1}^n$ when $\varphi$ is an automorphism of ${\bf E}$.
Suppose that $\varphi$ is an automorphism.
Then we have
$$
\begin{array}{ll}
\varphi({\bf e}_i)\varphi({\bf e}_j)={\bf 0}, & i\neq j\\[2mm]
\varphi({\bf e}_i^2)=[\varphi({\bf e}_i)]^2, & 1\leq i\leq n
\end{array}
$$
which is equivalent to the followings:

\begin{eqnarray}
\label{phi1}\sum_{k=1}^{n-1}\varphi_{ik}\varphi_{jk}{\bf e}_k^2={\bf 0},\ \  i\neq j\\
\label{phi2}\sum_{j=i+1}^na_{ij}\sum_{k=1}^n\varphi_{jk}{\bf e}_k=
\sum_{k=1}^{n-1}\varphi_{ik}^2{\bf e}_k^2,\ \ i\leq n-2\\
\label{phi3}a_{n-1,n}(\sum_{k=1}^n\varphi_{nk}{\bf e}_k)=\sum_{k=1}^{n-1}\varphi_{n-1,k}^2{\bf e}_{k}^2,\\
\label{phi4}\sum_{k=1}^{n-1}\varphi_{nk}^2{\bf e}_k^2=0.
\end{eqnarray}

The linear independence of $\{{\bf e}_1^2,{\bf e}_2^2,\cdots,{\bf e}_{n-1}^2\}$ together
with \eqref{phi1},\eqref{phi4} implies

\begin{eqnarray}
\label{varphi11}\varphi_{ik}\varphi_{jk}=0, & i\neq j,\ k\leq n-1\\
\label{varphi_nk=0}\varphi_{nk}=0, & k\leq n-1
\end{eqnarray}

We notice that
$\varphi_{nn}\neq0$. Now plugging \eqref{varphi_nk=0} into \eqref{phi3} one finds
\begin{equation}\label{varphi_nn=}
\begin{array}{ll}
\varphi_{nn}=\varphi_{n-1,n-1}^2\\
\varphi_{n-1,k}=0, &  k\leq n-2
\end{array}
\end{equation}

Inserting ${\bf e}_l^2=\sum_{j=l+1}^na_{lj}{\bf e}_j,\ l\leq n-1$ into \eqref{phi2} we obtain
\begin{eqnarray}
\label{varphi13}\sum_{j=i+1}^na_{ij}\varphi_{jl}=\sum_{j=1}^{l-1}a_{jl}\varphi_{ij}^2, &  i\leq n-2,\ l\geq2\\
\label{varphi14}\sum_{j=i+1}^na_{ij}\varphi_{j1}=0, & i\leq n-2
\end{eqnarray}

We claim:
\begin{equation}\label{matind2}
\begin{array}{ll}
\varphi_{il}=0, &  l+1\leq i\\
\varphi_{j+1,j+1}=\varphi_{jj}^2, & j\leq n-1
\end{array}
\end{equation}
Let us prove the last relations by induction. Due to \eqref{varphi_nk=0},\eqref{varphi_nn=}
the first step is satisfied.
Take an arbitrary $i_0>1$ and assume that for any $i>i_0$ assertion \eqref{matind2} holds.

We must prove that $\varphi_{i_0l}=0$ for any $l\leq i_0-1$ and $\varphi_{i_0i_0}=\varphi_{i_0-1,i_0-1}^2$.
Rewriting \eqref{varphi13} for $i=i_0>1$ one finds
\begin{equation}\label{varphii0j}
\sum_{j=i_0+1}^na_{i_0j}\varphi_{jl}=\sum_{j=1}^{l-1}a_{jl}\varphi_{i_0j}^2,\ \ l\geq2
\end{equation}
If $j>i_0$ then due to the assumption we have $\varphi_{jl}=0$ for any $l\leq i_0$. So,  for any
$l\leq i_0$ the left side of
\eqref{varphii0j} equals to zero.
Hence,
\begin{equation}\label{matind3}
\sum_{j=1}^{l-1}\varphi_{i_0j}^2a_{jl}=0,\ \ 2\leq l\leq i_0
\end{equation}
If $l=2$ then form \eqref{matind3} we obtain $a_{12}\varphi_{i_01}=0$. Noting $a_{12}\neq0$ one has $\varphi_{i_0,1}=0$.
Suppose that $\varphi_{i_0,l}=0$ for every $l<l_0\leq i_0$. Then this fact together with
\eqref{matind3} for $l=l_0$ implies $a_{l_0-1,l_0}\varphi_{i_0,l_0}=0$.
From $a_{l_0-1,l_0}\neq0$ it follows
$\varphi_{i_0,l_0}=0$. Thus, we have shown that $\varphi_{i_0,l}=0$ for every $l\leq i_0$.
From the arbitraryness of $i_0>1$ we conclude that
\begin{equation}\label{mi1}
\varphi_{il}=0,\ \ l+1<i
\end{equation}

On the other hand, rewriting \eqref{varphi13} for $l=i+1$ and keeping in mind \eqref{mi1} one gets
$$
a_{i,i+1}\varphi_{i+1,i+1}=a_{i,i+1}\varphi_{ii}^2,\ \ \ i\leq n-2.
$$
Due to $a_{i,i+1}\neq0$, the last equality yields $\varphi_{i+1,i+1}=\varphi_{ii}^2$ for every $i\leq n-2$.
This together with \eqref{varphi_nn=} implies
\begin{equation}\label{varphi_ii=}
\varphi_{ii}=\varphi_{11}^{2^{i-1}}\neq0,\ \ \ i\leq n.
\end{equation}
Hence, from \eqref{mi1} and \eqref{varphi_ii=} it follows \eqref{matind2}.

Plugging \eqref{mi1} into \eqref{varphi11} we have
\begin{equation}\label{varphi_ij=0}
\varphi_{ij}=0,\ \ \ i<j<n.
\end{equation}

Let us consider \eqref{varphi13} for $l>i+1$. Then for every $i\leq n-2$ we obtain

\begin{eqnarray}
\label{eq1}a_{il}\varphi_{ll}=a_{il}\varphi_{ii}^2,\ \ i+1<l<n\\[2mm]
\label{eq2}\sum\limits_{j=i+1}^na_{ij}\varphi_{jn}=a_{in}\varphi_{ii}^2,\ \ l=n.
\end{eqnarray}

From \eqref{eq2} with \eqref{varphi_ii=} we get a recurrence formula for $\varphi_{i,n}$ as follows:
\begin{equation}\label{reccur}
\begin{array}{ll}
\varphi_{n-1,n}=a_{n-2,n}(\varphi_{11}^{2^{n-2}}-\varphi_{11}^{2^{n-1}}),\\[2mm]
\varphi_{n-i,n}=a_{n-i-1,n}(\varphi_{11}^{2^{n-i-1}}-\varphi_{11}^{2^{n-1}})-\sum\limits_{k=1}^{i-1}a_{n-i-1,n-k}\varphi_{n-k,n}, & 1<i<n-1.
\end{array}
\end{equation}

Hence, we infer that $\varphi$ is an automorphism of evolution algebra \eqref{evolalg} if and only
if the followings hold:
\begin{equation}\label{varphi_auto}
\begin{array}{ll}
\varphi_{ij}=0, & i\neq j,\ j<n\\[2mm]
\varphi_{ii}=\varphi_{11}^{2^{i-1}}, & i\leq n\\[2mm]
a_{il}\varphi_{ll}=a_{il}\varphi_{ii}^2, & i+1<l<n\\[2mm]
\varphi_{n-1,n}=a_{n-2,n}(\varphi_{11}^{2^{n-2}}-\varphi_{11}^{2^{n-1}}),\\[2mm]
\varphi_{n-i,n}=a_{n-i-1,n}(\varphi_{11}^{2^{n-i-1}}-\varphi_{11}^{2^{n-1}})-\sum\limits_{k=1}^{i-1}a_{n-i-1,n-k}\varphi_{n-k,n}, & 1<i<n-1.
\end{array}
\end{equation}
Now let us consider two cases w.r.t.$I_A$.

{\bf Case $I_A=\emptyset$}. For the sake of convenience, we denote $\varphi_{11}=\alpha\neq0$.
Then from \eqref{varphi_auto} one gets
$$
\begin{array}{llll}
\varphi_{ij}=\varphi_{ji}=0, & i\neq j,\ j<n\\[2mm]
\varphi_{ii}=\alpha^{2^{i-1}}, & i\leq n\\[2mm]
\varphi_{1n}=\beta,\\[2mm]
\varphi_{in}=a_{i-1,n}(\alpha^{2^{i-1}}-\alpha^{2^{n-1}}) , & 1<i<n
\end{array}
$$
where $\beta\in\mathbb K$, which yields the assertion.

{\bf Case $I_A\neq\emptyset$}.
Then for the automorphism $\varphi$ we have
$$
\begin{array}{llll}
\varphi_{ij}=\varphi_{ji}=0, & i\neq j,\ j<n\\[2mm]
\varphi_{ii}=\alpha^{2^{i-1}}, & 1\leq i\leq n\\[2mm]
\alpha^{2^{l-1}-2^{i}}=1, & (i,l)\in I_A\\[2mm]
\varphi_{1n}=\beta,\\[2mm]
\varphi_{n-1,n}=a_{n-2,n}(\alpha^{2^{n-2}}-\alpha^{2^{n-1}}),\\[2mm]
\varphi_{n-i,n}=a_{n-i-1,n}(\varphi_{11}^{2^{n-i-1}}-\varphi_{11}^{2^{n-1}})-\sum\limits_{k=1}^{i-1}a_{n-i-1,n-k}\varphi_{n-k,n}, & 1<i<n-1.
\end{array}
$$
where $\alpha,\beta\in\mathbb K$ and $\alpha^\eta=1$, which implies the assertion.

The proof is complete.
\end{proof}

\begin{cor}\label{eta=2}
Let $I_A\neq\emptyset$ and $\eta=2$. Then $\varphi$
 is an automorphism of the evolution algebra \eqref{evolalg} iff it has the following form:
 \begin{equation}\label{eta=2_varphi}
 \varphi=\left(
 \begin{array}{llllll}
 \alpha & 0 & 0 & \vdots & 0 & \beta\\
 0 & 1 & 0 & \vdots & 0 & 0\\
  \vdots & \vdots & \vdots &\ddots & 0 & 0\\
 0 & 0 & 0 & \vdots & 1 & 0\\
 0 & 0 & 0 & \vdots & 0 & 1\\
  \end{array}\right)
 \end{equation}
where $\beta\in\mathbb K$ and $\alpha^2=1$.
\end{cor}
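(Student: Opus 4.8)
The plan is to specialize Theorem~\ref{thm_auto}$(i)$ to the value $\eta=2$ and simplify the resulting description of $\varphi$. First note that $I_A\neq\emptyset$ forces $n\geq4$, since a pair $(i,j)$ with $i+1<j<n$ exists only then. Consequently every exponent occurring in \eqref{varphi_auto} and \eqref{reccur} other than the top-left one — namely $2^{i-1}$ for $2\leq i\leq n$, together with $2^{n-1}$, $2^{n-2}$, and $2^{n-i-1}$ for $1<i<n-1$ (on this range $1\leq n-i-1\leq n-3$) — is an even integer. Writing $\alpha:=\varphi_{11}$, the constraint of Theorem~\ref{thm_auto}$(i)$ reads $\alpha^{\eta}=\alpha^{2}=1$, so every even power of $\alpha$ equals $1$.

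From \eqref{varphi_auto} we then read off $\varphi_{ij}=0$ for $i\neq j$ with $j<n$, $\varphi_{11}=\alpha$, and $\varphi_{ii}=\alpha^{2^{i-1}}=1$ for $2\leq i\leq n$. It remains to kill the last column. By the first line of \eqref{reccur}, $\varphi_{n-1,n}=a_{n-2,n}(\alpha^{2^{n-2}}-\alpha^{2^{n-1}})=a_{n-2,n}(1-1)=0$. Arguing by induction on $i$ in \eqref{reccur}: for $1<i<n-1$ the term $a_{n-i-1,n}(\alpha^{2^{n-i-1}}-\alpha^{2^{n-1}})$ vanishes because both exponents are even, and $\sum_{k=1}^{i-1}a_{n-i-1,n-k}\varphi_{n-k,n}=0$ by the inductive hypothesis $\varphi_{n-k,n}=0$ for $k<i$. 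Hence $\varphi_{in}=0$ for all $2\leq i\leq n-1$, while $\varphi_{1n}=\beta\in\mathbb K$ stays free. This is precisely the matrix \eqref{eta=2_varphi} with $\alpha^2=1$.

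For the converse I would check directly that a matrix of the form \eqref{eta=2_varphi} with $\alpha^2=1$ satisfies the characterization \eqref{varphi_auto}, hence lies in $Aut({\bf E})$ by Theorem~\ref{thm_auto}: the relations $\varphi_{ii}=\varphi_{11}^{2^{i-1}}$ hold since $\alpha^{2^{i-1}}=1$ for $i\geq2$, the relations $a_{il}\varphi_{ll}=a_{il}\varphi_{ii}^2$ become $a_{il}=a_{il}$ (as $\varphi_{ll}=\varphi_{ii}^2=1$ for $i,l\geq2$), and the recurrences in \eqref{reccur} collapse to $0=0$ exactly as above. The only step requiring genuine care is the induction for the last column — that is, confirming that on the index range $1<i<n-1$ the exponents $2^{n-i-1}$ appearing in \eqref{reccur} are always even, so that each leading term in the recurrence vanishes; everything else is routine bookkeeping.
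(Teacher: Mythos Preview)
Your argument is correct and is exactly the intended specialization of Theorem~\ref{thm_auto}$(i)$: the paper states Corollary~\ref{eta=2} without proof, and your reduction via $\alpha^{2}=1\Rightarrow\alpha^{2^{i-1}}=1$ for $i\geq2$ together with the inductive collapse of the recurrence \eqref{reccur} is precisely what is implicit. The observation that $I_A\neq\emptyset$ forces $n\geq4$, ensuring all relevant exponents $2^{n-i-1}$ are even, is the one point worth making explicit, and you handle it cleanly.
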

\begin{ex}\label{ex_2}
Now we show the existence of two nilpotent evolution algebras with $\dim({\bf{E}}^2)=n-1$ such that
they are not isomorphic to each other, and they have the same group of automorphisms and the Lie algebra of
derivations.

Let $n\geq5$ and consider the evolution algebras ${\bf{E}}_1$ and ${\bf{E}}_2$ given by
$$
{\bf{E}}_1^2:\left\{
\begin{array}{ll}
{\bf e}_1^2={\bf e}_2+{\bf e}_3+{\bf e}_4,\\
{\bf e}_i^2={\bf e}_{i+1}, & 1<i<n,\\
{\bf e}_{n}^2={\bf 0}
\end{array}\right.
$$
$$
{\bf{E}}_2^2:\left\{
\begin{array}{ll}
{\bf f}_1^2={\bf f}_2+{\bf f}_3,\\
{\bf f}_i^2={\bf f}_{i+1}, & 1<i<n,\\
{\bf f}_{n}^2={\bf 0}
\end{array}\right.
$$
By $A_1$ and $A_2$ we denote the matrices of structural constants of these algebras, respectively.
Then, it is easy to see that $I_{A_1}=\{(1,3),(1,4)\}$ and $I_{A_2}=\{(1,3)\}$.
Due to Corollary \ref{eta=2} we have $Aut({\bf E}_1)=Aut({\bf E}_2)$. Moreover,
according to Theorem \ref{thm_der} $(i)$ one can find $\Der({\bf E}_1)=\Der({\bf E}_2)$.

Let us establish that they are not isomorphic. Assume that ${\bf{E}}_1\cong {\bf{E}}_2$ and
$\xi=(\xi_{ij})_{i,j\geq1}^n$ be an isomorphism. Then it is easy to check that
$$
{\bf f}_i=\xi_{ii}{\bf e}_i+\xi_{in}{\bf e}_n,\ \ \ i\geq2
$$
Due to ${\bf f}_1{\bf f}_i=0$ for any $i\geq2$ one has ${\bf f}_{1}=\xi_{11}{\bf e}_1+\xi_{1n}{\bf e}_n$.
From ${\bf f}_1^2={\bf f}_2+{\bf f}_3$
we find
$$
\begin{array}{ll}
\xi_{11}^2=\xi_{22}\\
\xi_{11}^2=\xi_{33}\\
\xi_{11}^2=0\\
\xi_{2n}=-\xi_{3n}
\end{array}
$$
This contradicts to $\det(\xi)\neq0$. So, we infer that ${\bf{E}}_1\not\cong{\bf{E}}_2$.
\end{ex}

\subsection{Local automorphisms of Evolution algebras}

In previous section we have been able to find the set of all
automorphisms of evolution algebra \eqref{evolalg}. Now we will show that every
local automorphism is automorphism if evolution algebra is defined by \eqref{evolalg} with $n>2$.
Recall that a linear mapping $\psi$ from $\bf E$ to $\bf E$ is called {\it local automorphism}
if for every $\bf u\in\bf E$ there exists an automorphism
$\varphi_{\bf u}\in Aut({\bf E})$ such that $\psi({\bf u})=\varphi_{\bf u}({\bf u})$.

\begin{thm}
Let $\bf E$ be an $n$-dimensional nilpotent evolution algebra with maximal index of nilpotency.
Then the following statements hold:
\begin{enumerate}
\item[$(i)$] If $n=2$, then the set of all local automorphisms has the following form:
\begin{equation}\label{locder11}
\left\{
\left(
\begin{array}{ll}
\alpha & \beta\\
0 & \gamma^2
\end{array}\right): \alpha,\beta,\gamma\in\mathbb K,\ \alpha\gamma\neq0
\right\}
\end{equation}
\\
\item[$(ii)$]
If $n>2$ then every local automorphism of $\bf E$ is an automorphism.
\end{enumerate}
\end{thm}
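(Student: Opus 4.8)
The plan is to handle the two cases $n=2$ and $n>2$ separately, paralleling the structure of Theorem~\ref{thm_localder}. For part $(i)$, where $n=2$, by Lemma~\ref{lem1} we may assume $\mathbf E$ is given by ${\bf e}_1^2={\bf e}_2$, ${\bf e}_2^2={\bf 0}$. By Theorem~\ref{thm_auto}$(ii)$ (with $I_A=\emptyset$) the automorphism group consists of matrices $\left(\begin{smallmatrix}\alpha & \beta\\ 0 & \alpha^2\end{smallmatrix}\right)$ with $\alpha\neq0$. Write an arbitrary linear map $\psi$ as $(\psi_{ij})$; the condition $\psi({\bf u})=\varphi_{\bf u}({\bf u})$ for each $\bf u$ gives, taking $\bf u$ with $u_1=0$, that $\psi_{21}=0$ and $\psi_{22}=\gamma_{\bf u}^2 u_2$ for a scalar $\gamma_{\bf u}$ depending only on the direction; hence $\psi_{22}$ is a square (or zero, but zero is the square of zero). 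For $u_1\neq0$ one checks directly that any $\psi$ of the stated shape $\left(\begin{smallmatrix}\alpha & \beta\\ 0 & \gamma^2\end{smallmatrix}\right)$ with $\alpha\gamma\neq0$ is realized pointwise by a genuine automorphism (choose $\varphi_{\bf u}$ with diagonal entries $\gamma,\gamma^2$ and suitable off-diagonal entry absorbing the discrepancy in the ${\bf e}_2$-component), and that these exhaust the local automorphisms, giving \eqref{locder11}.

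For part $(ii)$, let $\psi=(\psi_{ij})$ be a local automorphism and split into the cases $I_A\neq\emptyset$ and $I_A=\emptyset$ exactly as in the proof of Theorem~\ref{thm_der} and Theorem~\ref{thm_auto}. Applying $\psi({\bf e}_i)=\varphi_{{\bf e}_i}({\bf e}_i)$ for each basis vector $i\le n$ and reading off the $i$-th row of an automorphism from Theorem~\ref{thm_auto} forces $\psi_{ij}=0$ for $i\ne j$, $j<n$, and $\psi_{1n}$ arbitrary, while $\psi_{ii}=\alpha_i^{2^{i-1}}$ for some nonzero scalar $\alpha_i$ (in the case $I_A\neq\emptyset$ one gets $\psi_{ii}=0$ for $i\ge 2$ directly and $\psi$ is visibly an automorphism, so assume $I_A=\emptyset$ from here). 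The remaining work is to show the various $\alpha_i$ are forced to be linked by $\alpha_i=\alpha_1^{2^{i-1}}$, i.e.\ that $\psi$ has the single-parameter diagonal of an honest automorphism, and that the entries $\psi_{in}$ for $1<i<n$ satisfy the recurrence \eqref{reccur}. To pin down the diagonal I would test $\psi$ on the vector $\mathbf u=\sum_{k=1}^{n-1}{\bf e}_k$ (and possibly on $\mathbf v={\bf e}_2+{\bf e}_n$, $\mathbf w={\bf e}_i+{\bf e}_j$ for pairs $i<j$), exactly as in Theorem~\ref{thm_localder}$(ii)$: matching $\psi(\mathbf u)$ against $\varphi_{\mathbf u}(\mathbf u)$ for an automorphism $\varphi_{\mathbf u}$ with parameter $\alpha$ gives $\psi_{ii}=\alpha^{2^{i-1}}$ simultaneously for all $i<n$, hence all the $\alpha_i$ coincide with a single $\alpha$, and testing on a vector with nonzero $n$-th component propagates this to $i=n$.

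Having established $\psi_{ii}=\alpha^{2^{i-1}}$ for all $i$, it remains to verify the column-$n$ entries. For each $i$ with $1<i<n$, choose $\mathbf u$ so that the constraint $\psi({\bf u})=\varphi_{\bf u}(\bf u)$ involves $\psi_{in}$: e.g.\ evaluate on ${\bf e}_1+{\bf e}_i$ or on a vector exciting consecutive coordinates, and compare with the formula for $\varphi_{in}$ from \eqref{varphi_auto}. Since the automorphism realizing $\psi$ on that particular vector must have the same parameter $\alpha$ as found above (its diagonal is already determined), its column-$n$ entries are forced to equal $a_{i-1,n}(\alpha^{2^{i-1}}-\alpha^{2^{n-1}})$, and matching components gives $\psi_{in}=a_{i-1,n}(\alpha^{2^{i-1}}-\alpha^{2^{n-1}})$, which together with $\psi_{1n}$ arbitrary is precisely the description of $Aut(\mathbf E)$ from Theorem~\ref{thm_auto}$(ii)$. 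Hence $\psi\in Aut(\mathbf E)$.

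The main obstacle is the bookkeeping in part $(ii)$: one must choose the test vectors carefully so that the local-automorphism relation actually couples the off-diagonal column-$n$ entries to the single diagonal parameter $\alpha$, rather than leaving each $\psi_{in}$ or each $\alpha_i$ free. The key technical point — as in the proof of Theorem~\ref{thm_localder}$(ii)$ — is that once a handful of diagonal entries are simultaneously matched against one automorphism, the parameter $\alpha$ is globally fixed, and every subsequent pointwise comparison is forced; the reduction via Lemmas~\ref{lem1} and~\ref{lem2} to the normalized algebra $\mathbf E'$ keeps these computations clean.
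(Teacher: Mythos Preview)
Your overall strategy matches the paper's, but there are two concrete errors.

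First, in part $(i)$: when $u_1\neq 0$ you propose realizing $\psi$ by an automorphism $\varphi_{\bf u}$ with diagonal $\gamma,\gamma^2$. That cannot work: the ${\bf e}_1$-component of $\varphi_{\bf u}({\bf u})$ would be $\gamma u_1$, whereas $\psi({\bf u})$ has ${\bf e}_1$-component $\alpha u_1$, and these disagree unless $\alpha=\gamma$. The correct choice (and the one the paper uses) is diagonal $\alpha,\alpha^2$ with off-diagonal entry $\beta+(\gamma^2-\alpha^2)u_2/u_1$, so that the discrepancy $(\gamma^2-\alpha^2)u_2$ in the ${\bf e}_2$-component is absorbed into the $\beta$-slot.

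Second, and more seriously, in part $(ii)$ you write that for $I_A\neq\emptyset$ ``one gets $\psi_{ii}=0$ for $i\ge 2$ directly and $\psi$ is visibly an automorphism''. This conflates automorphisms with derivations. By Theorem~\ref{thm_auto}$(i)$, automorphisms in the $I_A\neq\emptyset$ case have $\varphi_{ii}=\alpha^{2^{i-1}}$ with $\alpha^\eta=1$; these diagonal entries are roots of unity, not zero, and the column-$n$ entries $\varphi_{in}$ are given by the recurrence and need not vanish. So the case $I_A\neq\emptyset$ is not trivial and still requires the linking argument. In fact the paper does \emph{not} split on $I_A$ at all here: the argument you sketch for $I_A=\emptyset$ (evaluate on the ${\bf e}_i$, then on a vector with several nonzero coordinates to force $\alpha_{{\bf e}_i}^{2^{i-1}}=\alpha_{{\bf e}_1}^{2^{i-1}}$, then on ${\bf e}_2+{\bf e}_n$ to capture $\alpha_{{\bf e}_n}^{2^{n-1}}$) works uniformly in both cases, because the recurrence for $\varphi_{kn}$ involves only powers $\alpha^{2^m}$ with $m\ge k-1$, and these are already pinned down once $\alpha_{{\bf e}_k}^{2^{k-1}}=\alpha_{{\bf e}_1}^{2^{k-1}}$. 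Drop the case split and run your $I_A=\emptyset$ argument verbatim; then there is also no need to invoke Lemma~\ref{lem2}, which in any case concerns derivations rather than automorphisms.
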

\begin{proof}
$(i)$ Let $n=2$. Then due to Lemma \ref{lem1} we may assume that an evolution algebra $\bf E$ is
given by ${\bf e}_1^2={\bf e}_2$ and ${\bf e}_2^2=\bf0$.
Take an arbitrary linear map $\psi$ on $\bf E$, i.e.,
$$
\psi({\bf u})=(\psi_{11}u_1+\psi_{21}u_2){\bf e}_1+(\psi_{12}u_1+\psi_{22}u_2){\bf e}_2,\ \
\forall {\bf u}=u_1{\bf e}_1+u_2{\bf e}_2.
$$
If $\psi$ is local automorphism then for any $\bf u$ there exist $\alpha_{\bf u}$ and $\beta_{\bf u}$ such that
$$
\begin{array}{ll}
\psi_{11}u_1+\psi_{21}u_2=\alpha_{\bf u}u_1\\
\psi_{12}u_1+\psi_{22}u_2=\beta_{\bf u}u_1+\alpha^2_{\bf u}u_2
\end{array}
$$
From the first equation we get $\psi_{21}=0$. If we take $\bf u$ such that $u_1=0$ then
from the second equation we immediately find $\psi_{22}=\alpha^2_{\bf u}$. It yields that
if $\psi$ is local automorphism it has the following form
\begin{equation}\label{n=2aut}
\left(
\begin{array}{ll}
\alpha & \beta\\
0 & \gamma^2
\end{array}\right)
\end{equation}
where $\alpha\gamma\neq0$.

Let us show that \eqref{n=2aut} is indeed local automorphism of \eqref{evolalg}. In fact, for any
$\bf u\in\bf E$ we may take an automorphism $\varphi_{\bf u}$ of \eqref{evolalg} as follows:
$$
\varphi_{\bf u}=\left\{
\begin{array}{ll}
\left(
\begin{array}{ll}
\alpha & \beta+\frac{(\gamma^2-\alpha^2)u_2}{u_1}\\
0 & \alpha^2
\end{array}
\right), & \mbox{if }u_1\neq0\\[3mm]
\left(
\begin{array}{ll}
\gamma & 0\\
0 & \gamma^2
\end{array}
\right), & \mbox{if }u_1=0
\end{array}
\right.
$$
From this, one can check that $\psi({\bf u})=\varphi_{\bf u}({\bf u})$.

$(ii)$ Let $n>2$.
Let $\psi$ be a local automorphism for \eqref{evolalg}. By definition of local
automorphism, for every ${\bf u}\in{\bf E}$ we have $\psi({\bf u})=\varphi_{\bf u}({\bf u})$,
where $\varphi_{\bf u}$ is automorphism.
Then Theorem \ref{thm_auto} implies $\psi_{ij}=0$ for every $i\neq j$, $j<n$. On the other hand,
taking ${\bf u}={\bf e}_i$, $i\leq n$, we conclude that the local automorphism
$\psi$ has the following form:
$$
\psi=\left(
\begin{array}{llllll}
\alpha_{{\bf e}_1} & 0 & 0 & \vdots & 0 & \beta_{{\bf e}_1}\\
0 & \alpha_{{\bf e}_2}^2 & 0 & \vdots & 0 & \varphi_{2n}^{({\bf e}_2)}\\
\vdots & \vdots & \vdots & \ddots & \vdots & \vdots\\
0 & 0 & 0 & \vdots & \alpha_{{\bf e}_{n-1}}^{2^{n-2}} & \varphi_{n-1,n}^{({\bf e}_{n-1})}\\
0 & 0 & 0 & \vdots & 0 & \alpha_{{\bf e}_{n}}^{2^{n-1}}
\end{array}
\right)
$$
Now we take arbitrary ${\bf v}=\sum_{i=1}^{n}v_i{\bf e}_i$. Then from $\psi({\bf v})=\varphi_{\bf v}({\bf v})$ one gets
\begin{eqnarray}
\label{psi11}&&\alpha_{{\bf e}_i}^{2^{i-1}}v_i=\alpha_{\bf v}^{2^{i-1}}v_i,\ \  i<n\\
\label{psi22}&&\beta_{{\bf e}_1}v_1+\alpha_{{\bf e}_n}^{2^{n-1}}v_n+\sum_{k=2}^{n-1}\varphi_{kn}^{({\bf e}_k)}v_k=
\beta_{{\bf v}}v_1+\alpha_{{\bf v}}^{2^{n-1}}v_n+\sum_{k=2}^{n-1}\varphi_{kn}^{({\bf v})}v_k
\end{eqnarray}
From \eqref{psi11} we find
\begin{equation}\label{psiii}
\alpha_{{\bf e}_i}^{2^{i-1}}=\alpha_{{\bf e}_1}^{2^{i-1}},\ \ \ \ i<n
\end{equation}
Consequently,
$\varphi_{kn}^{({\bf e}_k)}=\varphi_{kn}^{({\bf e}_1)}$ for any $k<n$. Keeping in mind this fact, from \eqref{psi22}
one gets
\begin{equation}\label{ufff}
\beta_{{\bf e}_1}v_1+\alpha_{{\bf e}_n}^{2^{n-1}}v_n=
\beta_{{\bf v}}v_1+\alpha_{{\bf v}}^{2^{n-1}}v_n
\end{equation}
Finally, taking ${\bf v}'={\bf e}_2+{\bf e}_n$, form \eqref{psiii},\eqref{ufff}
we obtain
$$
\begin{array}{ll}
\alpha_{{\bf e}_1}^2=\alpha_{{\bf v}'}^2\\
\alpha_{{\bf e}_n}^{2^{n-1}}=\alpha_{{\bf v}'}^{2^{n-1}}
\end{array}
$$
which yields $\alpha_{{\bf e}_n}^{2^{n-1}}=\alpha_{{\bf e}_1}^{2^{n-1}}$. Putting the last one into \eqref{ufff} we get
$\beta_{{\bf e}_1}=\beta_{\bf v}$ for any $\bf v\in\bf E$.

So, we conclude that local automorphism $\psi=(\varphi_{ij})$ has the following form:
$$
\varphi_{ij}=\left\{
\begin{array}{ll}
\alpha_{{\bf e}_1}^{2^{i-1}}, & i=j\\[2mm]
\beta_{{\bf e}_1}, & i=1,\ j=n\\[2mm]
\varphi_{in}^{({\bf e}_1)}, & i>1,\ j=n\\[2mm]
0, & \mbox{otherwise}
\end{array}
\right.
$$
Hence, Theorem \ref{thm_auto} implies that the local automorphism $\psi$ is an automorphism.
This completes the proof.
\end{proof}

\section*{Acknowledgments}
The present work is supported by the UAEU "Start-Up" Grant, No.
31S259.


\begin{thebibliography}{99}

\bibitem{Ayupov} Ayupov Sh.A., Kudaybergenov K.K., Peralta A.M. A survey on local and $2$-local derivations
on $C^*$-algebras and von Neumann algebras, in Topics in Functional Analysis and Algebra,
Contemporary Mathematics AMS, 672, 2016, 73-126.

\bibitem{Some_properties} Camacho L.M., G\'{o}mez J.R., Omirov B.A., Turdibaev R.M. Some properties of evolution algebras, {\it Bull. Korean Math. Soc.}, 50(5), 2013, 1481-1494.

\bibitem{derevol} Camacho L.M., G\'{o}mez J.R., Omirov B.A., Turdibaev R.M. The derivations of some evolution algebras, {\it Linear Miltilinear Alg.}, 61(3), 2012, 309-322.

\bibitem{3dim} Casado C.Y., S.iles Molina M.S., Velasco M.V., Classification of three-dimensional evolution algebras, {\it Linear Algebra Appl.}, 524(1), 2017, 68-108.

\bibitem{clor} Casas J.M., Ladra M., Omirov B.A., Rozikov U.A.,  On nilpotent index and dibaricity of evolution algebras, {\it Linear Algebra Appl.,} 439(1), 2013. 90-105

\bibitem{rozomir} Casas J.M., Ladra M., Omirov B.A., Rozikov U.A., On evolution algebras, {\it Algebra Colloquium}
21(2), 2014, 331–342.

\bibitem{Costa1} Costa R. On the derivation algebra of gametic algebras for polyploidy with multiple alleles, {\it Bol. Soc. Brasil. Mat.}, 13, 1982, 69–81.

\bibitem{Costa2}  Costa R. On the derivation algebra of zygotic algebras for polyploidy with multiple alleles, {\it Bol.
Soc. Brasil. Mat.}, 14, 1983, 63–80.

\bibitem{Elduque} Elduque A., Labra A., On nilpotent evolution algebras,     {\it Linear Algebra Appl.}, 505, 2016, 11-31.

\bibitem{Gonshor}  Gonshor H. Derivations in genetic algebras, {\it Comm. Algebra}, 16(8), 1988, 1525-1542.

\bibitem{heg1} Hegazi A.S., Abdelwahab H., Nilpotent evolution algebras over arbitrary fields, {\it Linear Algebra Appl}, {\bf 486}, 2015, 345-360.

\bibitem{heg2} Hegazi A.S., Abdelwahab H., Five-dimensional nilpotent evolution algebras, arXiv:1508.07442v1.

\bibitem{Holgate}  Holgate P. The interpretation of derivations in genetic algebras, {\it Linear Algebra Appl.}, 85, 1987, 75–79.

\bibitem{Jacob} Jacobson, Automorphisms and Derivations of Lie Algebras, {\it Proc. Amer. Math. Soc.}, {\bf{6}}, (1955), 281-283.

\bibitem{Kadison}  Kadison R.V., Local derivations, {\it J. Algebra}, 130, 1990, 494-509.

\bibitem{Larson}  Larson D. R., Sourour A.R., Local derivations and local automorphisms of $B(X)$, {\it Proc. Sympos.
Pure Math.}, 51, 1990, 187-194.

\bibitem{Semrl}   $\check{S}$emrl P., Local automorphisms and derivations on $B(H)$, {\it Proc. Amer. Math. Soc.}, 125, 1997, 2677-2680.

\bibitem{t} Tian J.P., Evolution algebras and their applications, {\it Lecture Notes in Mathematics}, 1921, Springer-Verlag, Berlin, 2008.

\bibitem{tv} Tian J.P., Vojtechovsky P., Mathematical concepts of evolution algebras in non-Mendelian genetics, {\it Quasigroups Related Systems},
{\bf14}(1), 2006, 111-122.
\end{thebibliography}
\end{document}